\newtheorem{theorem}{Theorem}[section]
\newtheorem{lemma}{Lemma}[section]
\newtheorem{proposition}{Proposition}[section]
\newenvironment{proof}{\par\noindent{\bf Proof.}}{\hfill%
\qed\bigskip}
\def\be{\begin{equation}}
\def\ee{\end{equation}}
\def\ba{\begin{array}}
\def\ea{\end{array}}
\newcommand{\nc}{\newcommand}
\def\sqr#1#2{{\vcenter{\vbox{\hrule height.#2pt
        \hbox{\vrule width.#2pt height#1pt \kern#2pt
        \vrule width.#2pt}
        \hrule height.#2pt}}}}
\nc{\qed}{\hfill $\mathchoice\sqr77\sqr77\sqr{2.1}3\sqr{2.1}3 $}
\nc{\bea}{\begin{eqnarray}}
\nc{\eea}{\end{eqnarray}}
\nc{\bean}{\begin{eqnarray*}}
\nc{\eean}{\end{eqnarray*}}
\nc{\nn}{\nonumber}
\nc{\cA}{{\cal A}}
\nc{\cB}{{\cal B}}
\nc{\cC}{{\cal C}}
\nc{\cD}{{\cal D}}
\nc{\cE}{{\cal E}}
\nc{\cG}{{\cal G}}
\nc{\cF}{{\cal F}}
\nc{\cH}{{\cal H}}
\nc{\cI}{{\cal I}}
\nc{\cJ}{{\cal J}}
\nc{\cK}{{\cal K}}
\nc{\cL}{{\cal L}}
\nc{\cM}{{\cal M}}
\nc{\cN}{{\cal N}}
\nc{\cO}{{\cal O}}
\nc{\cP}{{\cal P}}
\nc{\cQ}{{\cal Q}}
\nc{\cR}{{\cal R}}
\nc{\Yildirim}{Y{\i}ld{\i}r{\i}m}
\nc{\cS}{{\cal S}}
\nc{\cT}{{\cal T}}
\nc{\cU}{{\cal U}}
\nc{\cV}{{\cal V}}
\nc{\tx}{{\tilde x}}
\nc{\la}{{\langle}}
\nc{\ra}{{\rangle}}
\def\R{\mathbb{R}}
\title{Global Solutions of Nonconvex Standard Quadratic Programs via Mixed Integer Linear Programming Reformulations}
\author{Jacek Gondzio\thanks{School of Mathematics, The University of Edinburgh, 
James Clerk Maxwell Building, EH9 3FD Edinburgh, United Kingdom ({\tt J.Gondzio@ed.ac.uk})} \and E. Alper \Yildirim\thanks{Department of
Industrial Engineering, Ko{\c c} University, 34450 Sar{\i}yer, Istanbul, Turkey ({\tt alperyildirim@ku.edu.tr}).}}
\date{\today}
\begin{document}

\maketitle

\vspace{-.5in}

\begin{abstract}
A standard quadratic program is an optimization problem that consists of minimizing a (nonconvex) quadratic form over the unit simplex. We focus on reformulating a standard quadratic program as a mixed integer linear programming problem. We propose two alternative mixed integer linear programming formulations. Our first formulation is based on casting a standard quadratic program as a linear program with complementarity constraints. We then employ binary variables to linearize the complementarity constraints. For the second formulation, we first derive an overestimating function of the objective function and establish its tightness at any global minimizer. We then linearize the overestimating function using binary variables and obtain our second formulation. For both formulations, we propose a set of valid inequalities. Our extensive computational results illustrate that the proposed mixed integer linear programming reformulations significantly outperform other global solution approaches. On larger instances, we usually observe improvements of orders of magnitude.
\end{abstract}

{\bf Key words: }Nonconvex optimization, quadratic programming, mixed integer linear programming, global optimization.

\bigskip

{\bf AMS Subject Classifications: }90C20, 90C11, 90C26.


\section{Introduction}
 
A standard quadratic program is an optimization problem in which a (nonconvex) homogeneous quadratic function is minimized over the unit simplex. An instance of a standard quadratic program is given by  
\[
 \textrm{(StQP)} \quad \nu(Q) := \min\limits_{x \in \Delta_n} x^T Q x,
\]
 where $Q \in \cS^n$ and $\cS^n$ denotes the set of $n \times n$ real symmetric
 matrices, $x \in \R^n$, and $\Delta_n$
 denotes the unit simplex in $\R^n$ given by
 \be \label{def_unit_simp}
 \Delta_n := \left\{x \in \R^n_+: e^T x= 1\right\},
 \ee
 where $e \in \R^n$ is the vector of all ones and $\R^n_+$ denotes the
 nonnegative orthant in $\R^n$.
 
We remark that having a quadratic form in the objective function is not restrictive since the problem of minimizing a nonhomogeneous quadratic function over the unit simplex can be reformulated in the form of (StQP) using the following identity:
 \[
  x^T Q x + 2 c^T x = x^T (Q + e c^T + c e^T) x, \quad \textrm{for each}~ x \in \Delta_n.
 \] 
 
Standard quadratic programs arise in a variety of applications ranging from the classical portfolio optimization problem~\cite{HM1952} to population genetics~\cite{K61}; from quadratic resource allocation~\cite{IN88} to selection replicator dynamics and evolutionary game theory~\cite{Bom02a}. For a given matrix $Q \in \cS^n$, $Q$ is copositive if and only if $\nu(Q) \geq 0$. Therefore, standard quadratic programs can be used to check if a matrix is copositive. We refer the reader to the paper~\cite{Bom98} for other applications, in which the term standard quadratic program was coined. The maximum stable set problem in graph theory~\cite{MS65} and its weighted version~\cite{Gib97} can be formulated as instances of (StQP), which implies that (StQP) is, in general, NP-hard. 

There is an extensive amount of literature on standard quadratic programs. In this paper, we are concerned with computing a global solution of (StQP). We therefore restrict our literature review to the exact solution approaches in the literature. All of these approaches, in general, are based on a branch-and-bound scheme and differ only in terms of the subroutines used for computing upper and lower bounds, and subdividing the feasible region. For instance, a DC (difference of two convex functions) programming approach is employed in~\cite{Bom02b} to compute a lower bound and a local optimization method is used to find an upper bound. Using a relation between global solutions of (StQP) and the set of cliques in an associated graph, referred to as the {\em convexity graph} (see Section~\ref{prop_opt_sol}), a branch-and-bound method based on an implicit enumeration of cliques is proposed in~\cite{ST08}. More recently, another branch-and-bound method was proposed in~\cite{LMP17}, in which both convex envelope estimators and polyhedral underestimators are employed for computing lower bounds and an implicit enumeration of the KKT points using the relation with the set of cliques in the convexity graph is utilized. In another recent paper~\cite{BLST16}, a set of cutting planes is proposed in the context of a spatial branch-and-bound scheme.

Standard quadratic programs can also be solved by finite branch-and-bound methods proposed for solving more general nonconvex quadratic programming problems (see, e.g.,~\cite{BV08,CB12}). These approaches are based on an implicit enumeration of the complementarity constraints in the KKT conditions. The resulting subproblems are approximated by semidefinite relaxations or by polyhedral semidefinite relaxations. By a simple manipulation of the KKT conditions, a general quadratic program can be formulated as a linear program with complementarity constraints (LPCC) (see, e.g.,~\cite{Hu2012} and the references therein) and the resulting LPCC can be solved by an enumerative scheme such as branch-and-bound. An LPCC can also be formulated as a mixed integer linear programming (MILP) problem and can be solved using Benders decomposition~\cite{Hu2012} or by branch-and-cut~\cite{YMP16}. A similar MILP formulation is proposed in~\cite{XVZ15} under the assumption of a bounded feasible region. Alternatively, using the completely positive reformulation of (StQP) (see, e.g.,~\cite{Bom_etal_00}), adaptive inner and outer polyhedral approximations of completely positive programs can be employed~\cite{BD09}. Clearly, one can also use general purpose nonlinear programming solvers such as {BARON}~\cite{TS05} and {COUENNE}~\cite{BLL09}.

In this paper, we propose globally solving a standard quadratic program by reformulating it as a mixed integer linear programming (MILP) problem. We choose MILP reformulations due to the existence of powerful state-of-the-art MILP solvers such as {CPLEX}~\cite{cplex} and {GUROBI}~\cite{gurobi}. We propose two different MILP reformulations. Our first formulation is based on reformulating (StQP) as a linear program with complementarity constraints and linearizing the complementarity constraints by using binary variables and big-$M$ constraints. We discuss how to obtain valid bounds for the big-$M$ parameters by exploiting the structure of (StQP). The second formulation is obtained by replacing the quadratic objective function in (StQP) by an overestimating function given by the maximum of a finite number of linear functions associated with the positive components of a feasible solution, referred to as the support. We show that the overestimating function is exact at all KKT points of (StQP), which leads to the second MILP formulation by introducing binary variables for modeling the support of a feasible solution. We further show that our second MILP formulation is, in fact, an exact relaxation of the first one. Furthermore, using the relation between the support of a global minimizer of (StQP) and the set of cliques in the convexity graph, we propose a set of valid inequalities for both of our MILP formulations. We conduct extensive computational experiments to assess the performances of our MILP formulations in comparison with several other global solution approaches. The computational results indicate that the proposed MILP formulations consistently outperform other global solution approaches. Furthermore, on especially large instances, we observe improvements of orders of magnitude.

Our work is related to the previous work on reformulations of a quadratic program as an instance of a linear program with complementarity constraints (LPCC)~\cite{Hu2012,XVZ15}. For a general quadratic program, the paper~\cite{Hu2012} proposes a two-stage LPCC approach. In the first stage, an LPCC is solved to determine if the quadratic program is bounded below, in which case a second LPCC is formulated to compute a global solution. The resulting complementarity problems are formulated as MILP problems and solved using a parameter-free approach via Benders decomposition, which eliminates the need for big-$M$ parameters~\cite{MPBK08} (see~\cite{YMP16} for a branch-and-cut approach). In contrast, the paper~\cite{XVZ15} explicitly uses big-$M$ parameters. By using a Hoffman type error bound, the authors show that there exists a valid upper bound for the big-$M$ parameters under the assumption of a bounded feasible region. They give a closed form expression of this bound for (StQP). Our first MILP formulation, which is based on a similar approach as in these previous MILP formulations, also employs big-$M$ parameters as in~\cite{XVZ15}. In contrast with their approach, we exploit the specific structure of (StQP) in an attempt to obtain much tighter bounds for big-$M$ parameters. Furthermore, our second MILP formulation is based on specifically taking advantage of the particular structure of (StQP). Therefore, in contrast with the previous approaches in the literature, we propose stronger MILP formulations for a more specific class of quadratic programs.
 
This paper is organized as follows. In Section~\ref{notation}, we briefly review our notation. Section~\ref{prelim} discusses several useful properties of standard quadratic programs. We present our MILP formulations as well as a set of valid inequalities in Section~\ref{milp_form}. Section~\ref{comp} is devoted to the results of our computational experiments. We conclude the paper in Section~\ref{conc}.

\subsection{Notation} \label{notation}

We use $\R^n, \R^n_+$, and $\cS^n$ to denote the $n$-dimensional Euclidean space, the nonnegative orthant, and the space of $n \times n$ real symmetric matrices, respectively. For $u \in \R^n$, we denote its $j$th component by $u_j,~j = 1,\ldots,n$. Similarly, $U_{ij}$ denotes the $(i,j)$ entry of a matrix $U \in \cS^n,~i = 1,\ldots,n;~j = 1,\ldots,n$. We denote the unit simplex in $\R^n$ by $\Delta_n$. For any $U \in \cS^n$ and $V \in \cS^n$, the trace inner product of $U$ and $V$ is given by $\langle U, V \rangle := \sum\limits_{i=1}^n \sum\limits_{j = 1}^n U_{ij} V_{ij}$. The unit vectors in $\R^n$ are denoted by $e_j,~j = 1,\ldots,n$. We reserve $e \in \R^n$ and $E = e e^T \in \cS^n$ for the vector of all ones and the matrix of all ones, respectively. We use $0$ to denote the real number zero, the vector of all zeroes as well as the matrix of all zeroes in the appropriate dimension, which will always be clear from the context. We use $\textrm{conv}(\cdot)$ to denote the convex hull. We define the following convex cones in $\cS^n$:
\begin{eqnarray}
\label{def_Nn}
\cN^n & = & \left\{M \in \cS^n: M_{ij} \geq 0, \quad i = 1,\ldots,n;~j = 1,\ldots,n\right\}, \\
\label{def_Sn}
\cS^n_+ & = & \left\{M \in \cS^n: u^T M u \geq 0, \quad \forall~u \in \R^n \right\}, \\
\label{def_COPn}
{\cal COP}^n & = & \left\{M \in \cS^n: u^T M u \geq 0, \quad \forall~u \in \R^n_+ \right\}, \\
\label{def_CPn}
{\cal CP}^n & = & \textrm{conv}\left\{u u^T: u \in \R^n_+\right\}, \\
\label{def_Dn}
{\cal DNN}^n & = & \cS^n_+ \cap \cN^n,
\end{eqnarray}
namely, the cone of component-wise nonnegative matrices, the cone of positive semidefinite matrices, the cone of copositive matrices, the cone of completely positive matrices, and the cone of doubly nonnegative matrices, respectively. The following relations easily follow from these definitions:
\begin{equation} \label{inclusions}
{\cal CP}^n \subseteq {\cal DNN}^n \subseteq {\cal COP}^n.
\end{equation}

\section{Preliminaries} \label{prelim}

In this section, we review several basic properties of standard quadratic programs that will be useful in the subsequent sections. We remark that these results can be found in the literature (see, e.g.,~\cite{Bom98,BLT08}). We include proofs of some of these results for the sake of completeness.

\subsection{Optimality Conditions}

Since a standard quadratic program has linear constraints, constraint qualification is satisfied at every feasible solution. Given an instance of (StQP), if $x \in \Delta_n$ is an optimal solution, then there exist $s \in \R^n$ and $\lambda \in \R$ such that the following KKT conditions are satisfied:
\begin{eqnarray} \label{kkt}
Q x - \lambda e - s & = & 0, \label{eq1}\\
e^T x & = & 1, \label{eq2} \\
x & \in & \R^n_+, \label{eq3} \\
s & \in & \R^n_+, \label{eq4} \\
x_j s_j & = & 0, \quad j = 1,\ldots,n. \label{eq5}
\end{eqnarray}

For an instance of (StQP), $x \in \Delta_n$ is said to be a {\em KKT point} if there exist $s \in \R^n$ and $\lambda \in \R$ such that the conditions (\ref{eq1}) -- (\ref{eq5}) are satisfied. 

\subsection{Properties of $\nu(Q)$} 

The following lemma presents several useful properties about the optimal value function $\nu(\cdot)$.

\begin{lemma} \label{lem1}
For any $Q \in \cS^n$, $Q_1 \in \cS^n$, $Q_2 \in \cS^n$, and $\gamma \in \R$, the following relations are satisfied:
\begin{enumerate}
\item[(i)] $\nu(Q + \gamma ee^T) = \nu(Q) + \gamma$.
\item[(ii)] If $Q_1 - Q_2 \in \cN^n$, then $\nu(Q_1) \geq \nu(Q_2)$.
\item[(iii)] If $Q$ is a diagonal matrix with strictly positive diagonal entries $Q_{11},\ldots,Q_{nn}$, then 
\[
\nu(Q) = \frac{1}{\sum\limits_{k=1}^n (1/Q_{kk})}.
\]
\item[(iv)] Let $\gamma_0 = \min\limits_{1\leq i \leq j \leq n} Q_{ij}$ and $\gamma_1 = \min\limits_{k=1,\ldots,n} Q_{kk}$. Then, $\gamma_0 \leq \nu(Q) \leq \gamma_1$. 
\end{enumerate}
\end{lemma}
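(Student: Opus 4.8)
The plan is to treat the four parts essentially independently, using the earlier ones to shorten the later ones, and to exploit throughout the single algebraic fact that every $x \in \Delta_n$ satisfies $e^T x = 1$, so that the constant contribution $\gamma(e^T x)^2 = \gamma$ is common to all feasible points.

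For part (i), I would start from the identity $x^T(Q + \gamma e e^T)x = x^T Q x + \gamma (e^T x)^2 = x^T Q x + \gamma$, valid for every $x \in \Delta_n$. Since the two objectives differ by the constant $\gamma$ at every feasible point, their minima over $\Delta_n$ differ by $\gamma$, which gives the claim. For part (ii), I would note that for any $x \in \Delta_n \subseteq \R^n_+$ and $M = Q_1 - Q_2 \in \cN^n$ we have $x^T M x = \sum_{i,j} M_{ij} x_i x_j \geq 0$, because each $M_{ij} \geq 0$ and each $x_i x_j \geq 0$. Hence $x^T Q_1 x \geq x^T Q_2 x$ pointwise on $\Delta_n$; evaluating at a minimizer of the left-hand problem and bounding below by the minimum of the right-hand one yields $\nu(Q_1) \geq \nu(Q_2)$.

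For part (iii), the objective collapses to $x^T Q x = \sum_{k=1}^n Q_{kk} x_k^2$. I would bound this below via Cauchy--Schwarz applied to the vectors with components $\sqrt{Q_{kk}}\,x_k$ and $1/\sqrt{Q_{kk}}$: since $\sum_k x_k = 1$, one gets $1 = (\sum_k x_k)^2 \leq (\sum_k Q_{kk} x_k^2)(\sum_k 1/Q_{kk})$, i.e. $\sum_k Q_{kk} x_k^2 \geq 1/\sum_k(1/Q_{kk})$. I would then exhibit the feasible point with $x_k = (1/Q_{kk})/\sum_j(1/Q_{jj})$, which lies in $\Delta_n$ (strictly positive entries summing to one) and attains the bound, so the bound equals $\nu(Q)$. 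Alternatively, since a positive diagonal $Q$ is positive definite the problem is convex, and I could instead verify that this candidate satisfies the KKT system (\ref{eq1})--(\ref{eq5}), which is then sufficient for global optimality.

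For part (iv), I would obtain the upper bound by feasibility: evaluating the objective at each vertex $e_k$ gives $e_k^T Q e_k = Q_{kk}$, so $\nu(Q) \leq \min_k Q_{kk} = \gamma_1$. For the lower bound I would combine the earlier parts: since $Q$ is symmetric, $\gamma_0 = \min_{i \leq j} Q_{ij}$ is the minimum over all entries, so $Q - \gamma_0 e e^T \in \cN^n$; applying (ii) with $Q_1 = Q$ and $Q_2 = \gamma_0 e e^T$, and then (i) together with $\nu(0) = 0$ to evaluate $\nu(\gamma_0 e e^T) = \gamma_0$, yields $\nu(Q) \geq \gamma_0$. None of the steps is a genuine obstacle; the only point requiring a moment's care is the attainment claim in (iii), where one must confirm that the Cauchy--Schwarz lower bound is actually achieved by a feasible point (equivalently, that the exhibited candidate is a true global minimizer) rather than merely a valid bound.
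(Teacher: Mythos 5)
Your proposal is correct, and parts (i), (ii), and (iv) follow essentially the paper's own argument: (i) is the same constant-shift identity, (ii) is the same pointwise comparison on $\Delta_n$, and (iv) uses the same two ingredients (vertices $e_k$ for the upper bound; the decomposition $Q - \gamma_0 ee^T \in \cN^n$ together with (i) for the lower bound — you route this through (ii) applied to $Q_1 = Q$, $Q_2 = \gamma_0 ee^T$, the paper applies nonnegativity of $\nu$ on $\cN^n$ directly, an immaterial difference). The only genuine divergence is part (iii): the paper argues via the KKT system (\ref{eq1})--(\ref{eq5}), asserting that a simple manipulation shows the unique KKT point is $x_k = (1/Q_{kk})/\sum_j (1/Q_{jj})$ and substituting it into the objective, whereas you prove the lower bound $\sum_k Q_{kk} x_k^2 \geq 1/\sum_k (1/Q_{kk})$ by Cauchy--Schwarz and then verify that the same point attains it. Your route is more self-contained: it needs no appeal to existence of a minimizer, to the claim that every minimizer is a KKT point, or to uniqueness of the KKT solution (the paper leaves the verification of that uniqueness to the reader, and it does require a short sign argument to rule out zero components). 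The paper's route, on the other hand, requires no inspired guess of the minimizer — the candidate point falls out of solving the stationarity and complementarity conditions — and it keeps the lemma aligned with the KKT machinery used throughout the rest of the paper. Your closing caution about attainment is exactly the right point to check, and your explicit evaluation of the objective at the candidate settles it.
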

\begin{proof}
\begin{enumerate}
\item[(i)] For any $Q \in \cS^n$ and any $\gamma \in \R$, we have
\[
 \nu(Q + \gamma e e^T) = \min\{x^T (Q + \gamma e e^T) x: x \in \Delta_n\} = \gamma + \min\{x^T Q x: x \in \Delta_n\} = \nu(Q) + \gamma.
\]
\item[(ii)] Let $Q_1 - Q_2 \in \cN^n$. Then, for any $x \in \Delta_n$, we have $x^T (Q_1 - Q_2) x \geq 0$, which implies that 
\[
x^T Q_1 x \geq x^T Q_2 x, \quad \forall~x \in \Delta_n,
\]
from which the assertion follows.
\item[(iii)] If $Q$ is a diagonal matrix with strictly positive diagonal entries $Q_{11},\ldots,Q_{nn}$, then a simple manipulation of the KKT conditions (\ref{eq1}) -- (\ref{eq5}) reveals that the unique KKT point satisfies
\[
x_k = \frac{1/Q_{kk}}{\sum\limits_{k=1}^n (1/Q_{kk})}, \quad k = 1,\ldots,n.
\]
Substituting this solution in the objective function yields the result.
\item[(iv)] Let $\gamma_0 = \min\limits_{1\leq i \leq j \leq n} Q_{ij}$. Then, $Q - \gamma_0 ee^T \in \cN^n$, which implies that $0 \leq \nu(Q - \gamma_0 ee^T) = \nu(Q) - \gamma_0$, where we used (i). Therefore, $\gamma_0 \leq \nu(Q)$. Furthermore, since $e_k \in \Delta_n$ for each $k = 1,\ldots,n$, we have $\nu(Q) \leq \min\limits_{k=1,\ldots,n} e_k^T Q e_k = \min\limits_{k=1,\ldots,n} Q_{kk} = \gamma_1$.
\end{enumerate}
\end{proof}

\subsection{Properties of An Optimal Solution} \label{prop_opt_sol}

In this section, we present a useful relation between an optimal solution of a standard quadratic program and a related graph. 

Given an instance of (StQP) with $Q \in \cS^n$, we can associate with it an undirected graph $G = (V,E)$, called the {\em convexity graph of $Q$}, where $V = \{1,2,\ldots,n\}$ with node $j$ corresponding to the vertex of the unit simplex $e_j,~j = 1,\ldots,n$. There is an edge between node $i$ and node $j$ if the restriction of the quadratic form $x^T Q x$ to the edge of the unit simplex between the vertices $e_i$ and $e_j$ is strictly convex, i.e., 
\[
E = \{(i,j): Q_{ii} + Q_ {jj} - 2 Q_{ij} > 0, \quad 1 \leq i < j \leq n\}.
\]

For $x \in \Delta_n$, we introduce the following index sets:
\begin{eqnarray}
\label{def_Px}
{\cal P}(x) & = & \left\{ j \in \{1,\ldots,n\}: x_j > 0 \right\}, \label{def_P} \\
\label{def_Zx}
{\cal Z}(x) & = & \left\{ j \in \{1,\ldots,n\}: x_j = 0 \right\}. \label{def_Z} 
\end{eqnarray}

The indices in ${\cal P}(x)$ are referred to as the {\em support} of $x$. 
For a given undirected graph $G = (V,E)$, a set $C \subseteq V$ of nodes is called a {\em clique} if each pair of nodes is connected by an edge. Similarly, a set $S \subseteq V$ of nodes is called a {\em stable set} if no two nodes in $S$ are connected by an edge. The following theorem~\cite{ST08} establishes a useful connection between the support of a global solution of (StQP) and the convexity graph $G = (V,E)$.

\begin{theorem}[Scozzari and Tardella, 2008] \label{clique_thm}
Given an instance of (StQP), let $G = (V,E)$ denote the convexity graph of $Q$. Then, there exists a globally optimal solution $x^* \in \Delta_n$ of (StQP) such that the nodes corresponding to the indices in ${\cal P}(x^*)$ (i.e., the support of $x^*$) in $G$ form a clique (or, equivalently, a stable set in the complement of $G$). 
\end{theorem}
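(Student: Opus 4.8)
The plan is to prove the statement by a support-reduction argument: starting from an arbitrary global minimizer, I would repeatedly perturb it along the edge direction $e_i - e_j$ associated with a pair of support indices $i,j$ that are \emph{not} joined by an edge of $G$, showing that each such perturbation yields another global minimizer with strictly smaller support, until no such pair remains and the support has become a clique.

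First I would set up the one-dimensional restriction. Let $x^* \in \Delta_n$ be a global minimizer, and suppose its support $\cP(x^*)$ is not a clique, so there exist distinct $i,j \in \cP(x^*)$ with $(i,j) \notin E$; by the definition of the convexity graph this means $Q_{ii} + Q_{jj} - 2Q_{ij} \leq 0$. Consider the feasible segment $x(\tau) = x^* + \tau(e_i - e_j)$ for $\tau \in [-x_i^*, x_j^*]$. Since $e^T(e_i - e_j) = 0$ and $x_i^*, x_j^* > 0$, every $x(\tau)$ lies in $\Delta_n$ and $\tau = 0$ is interior to the interval. Writing $f(\tau) := x(\tau)^T Q x(\tau)$, a direct expansion gives
\[
f(\tau) = f(0) + 2\tau\, (e_i - e_j)^T Q x^* + \tau^2 \big(Q_{ii} + Q_{jj} - 2Q_{ij}\big),
\]
so the leading coefficient is $\leq 0$ and $f$ is a concave (possibly affine) quadratic in $\tau$.

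The key observation is that a concave function on a closed bounded interval attains its minimum at one of the endpoints; let $\tau^* \in \{-x_i^*, x_j^*\}$ be such a minimizer. On the one hand $f(\tau^*) \leq f(0)$ because $\tau^*$ minimizes $f$ over an interval containing $0$; on the other hand $f(\tau^*) \geq f(0)$ because $x(\tau^*) \in \Delta_n$ and $x^*$ is globally optimal. Hence $f(\tau^*) = f(0)$, so $x(\tau^*)$ is again a global minimizer. Since $\tau^*$ is an endpoint, either $x_j(\tau^*) = 0$ (when $\tau^* = x_j^*$) or $x_i(\tau^*) = 0$ (when $\tau^* = -x_i^*$); in either case exactly one support index is eliminated while no new component becomes positive, because only coordinates $i$ and $j$ are altered. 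Thus $|\cP(x(\tau^*))| < |\cP(x^*)|$.

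To finish, I would iterate this construction. Each step produces a global minimizer whose support is strictly smaller, so after finitely many steps the process halts at a global minimizer $x^*$ for which no pair of support indices violates the edge condition, i.e. $\cP(x^*)$ induces a clique in $G$. The equivalent reformulation as a stable set in the complement graph is immediate from the definitions. The main thing to get right is the endpoint/global-optimality squeeze: it is the concavity of $f$, forced precisely by the \emph{absence} of an edge, that guarantees a support-reducing move without increasing the objective, and I would be careful to confirm that zeroing out one coordinate cannot inadvertently leave the support size unchanged.
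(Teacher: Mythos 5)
Your proof is correct. Note first that the paper itself does not prove this statement at all: it imports Theorem~\ref{clique_thm} from Scozzari and Tardella~\cite{ST08} and only uses it (to derive the valid inequalities \eqref{valid_ineq}), so there is no internal proof to compare against. Your support-reduction argument is a complete and valid proof of the cited result, and it is the classical argument for theorems of this type (in the spirit of Motzkin--Straus): the absence of an edge $(i,j)$ in the convexity graph means $Q_{ii}+Q_{jj}-2Q_{ij}\leq 0$, so the restriction of the quadratic form to the segment $x^*+\tau(e_i-e_j)$, $\tau\in[-x_i^*,x_j^*]$, is concave, its minimum is attained at an endpoint, and global optimality of $x^*$ forces the endpoint value to equal $\nu(Q)$, yielding a new global minimizer with support smaller by exactly one index. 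Your two points of care are exactly the right ones and both check out: the squeeze $f(\tau^*)\leq f(0)\leq f(\tau^*)$ uses that $0$ lies in the (nondegenerate, since $x_i^*>0$ and $x_j^*>0$) interval, and since only coordinates $i$ and $j$ change, with the surviving one becoming $x_i^*+x_j^*>0$, the support cardinality strictly decreases, guaranteeing termination at a global minimizer whose support induces a clique in $G$ (equivalently, a stable set in the complement).
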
 

\subsection{Lower Bounds on $\nu(Q)$}

In this section, given an instance of (StQP), we review two lower bounds on $\nu(Q)$.

\subsubsection{A Simple Lower Bound}

We start with a simple lower bound on $\nu(Q)$. By Lemma~\ref{lem1}(iv), 
\[
\nu(Q) \geq \gamma_0 = \min_{1\leq i \leq j \leq n} Q_{ij},
\]
with equality if there exists $k \in \{1,\ldots,n\}$ such that $\gamma_0 = \gamma_1 = Q_{kk}$.

This lower bound can be slightly sharpened if the minimum entry of $Q$ is not on the main diagonal, i.e. if $\gamma_0 < \gamma_1$. In this case, we have $Q - \gamma_0 e e^T \in \cN^n$ with strictly positive diagonal elements, which can be decomposed as $Q - \gamma_0 e e^T = D + F$, where $D \in \cN^n$ and $F \in \cN^n$ are such that $D$ is a diagonal matrix with strictly positive entries $D_{kk} = Q_{kk} - \gamma_0,~k = 1,\ldots,n$ along the main diagonal, and all diagonal entries of $F$ are equal to zero. Since $(Q - \gamma_0 e e^T) - D = F \in \cN^n$, it follows by Lemma~\ref{lem1}(i), (ii), and (iii) that 
\[
\nu(Q - \gamma_0 e e^T)  = \nu(Q) - \gamma_0 \geq \nu(D) = \frac{1}{\sum\limits_{k=1}^n (1/D_{kk})} = \frac{1}{\sum\limits_{k=1}^n (1/\left(Q_{kk} - \gamma_0\right))}.
\]
This gives rise to the following lower bound on $\nu(Q)$ (see, e.g.,~\cite{BLT08}):
\[
  \textrm{(LB1)} \quad \nu(Q) \geq \ell_1(Q) := \min\limits_{1\leq i \leq j \leq n} Q_{ij}+ \frac{1}{\sum\limits_{k=1}^n \left(1/(Q_{kk} - \min\limits_{1\leq i \leq j \leq n} Q_{ij})\right)},
\]
where we define $1/0 = \infty$, $1/\infty = 0$, and $\beta + \infty = \infty$ for any $\beta \in \R$. These definitions imply that $\ell_1(Q) = \min_{1\leq i \leq j \leq n} Q_{ij} = \gamma_0$ if $\min\limits_{1\leq i \leq j \leq n} Q_{ij} = \min\limits_{k = 1,\ldots,n} Q_{kk}$.

\subsubsection{Lower Bound from Doubly Nonnegative Relaxation} \label{dnn_lb}

In this section, we present another lower bound on $\nu(Q)$ using an alternative formulation of (StQP).

A standard quadratic program can be equivalently reformulated as the following
 instance of a linear optimization problem over the convex cone of completely positive
 matrices~\cite{Bom_etal_00}:
\[
 \textrm{(CPP)} \quad \nu(Q) = \min\{\la Q, X \ra: \la E, X \ra =
 1, \quad X \in {\cal CP}^n\},
\]
 where $X \in \cS^n$ and $\cC \cP^n$ is given by \eqref{def_CPn}.
Despite the fact that (CPP) is a convex reformulation of (StQP), it remains NP-hard since the membership problem for the cone of completely positive matrices is intractable (see, e.g.,~\cite{DL14}).  

By \eqref{inclusions}, one can replace the intractable cone of completely positive matrices in (CPP) by the larger but tractable cone of doubly nonnegative matrices so as to obtain the 
following doubly nonnegative relaxation of (CPP):
\[
 \textrm{(DNN)} \quad \min\left\{\langle Q, X \rangle: \langle E, X \rangle = 1, \quad X \in {\cal DNN}^n \right\},
\]
where ${\cal DNN}^n$ is given by \eqref{def_Dn}.

Therefore, another lower bound on $\nu(Q)$ is given by 
\[
 \textrm{(LB2)} \quad \nu(Q) \geq \ell_2(Q) := \min\left\{\langle Q, X \rangle: \langle E, X \rangle = 1, \quad X \in \cD^n \right\}.
\]

By~\cite[Theorem 13]{BLT08}, we have the following relation: 
\begin{equation} \label{lb_comparison}
\ell_1(Q) \leq \ell_2(Q) \leq \nu(Q),
\end{equation}
i.e., the lower bound $\ell_2(Q)$ is at least as tight as $\ell_1(Q)$. By Lemma~\ref{lem1}(iv) and \eqref{lb_comparison}, both lower bounds are exact if the minimum entry of $Q$ lies along the diagonal. However, as illustrated by our computational results in Section~\ref{comp}, $\ell_2(Q)$ is, in general, much tighter than $\ell_1(Q)$.

Note that $\ell_1(Q)$ can be computed in $O(n^2)$ time whereas $\ell_2(Q)$ requires solving a computationally expensive semidefinite program. We remark that there exist other lower bounds in the literature (see, e.g.,~\cite{N99,BD02,AB05,BLT08}, and~\cite{BLT08} for a comparison). Usually, there is a trade-off between the quality of the lower bound and its computational cost.

\section{Mixed Integer Linear Programming Formulations} \label{milp_form}

In this section, we present two different mixed integer linear programming (MILP) reformulations of standard quadratic programs. We then propose a set of inequalities that are valid for both formulations.

\subsection{A Formulation Based on KKT Conditions} \label{kkt_milp}

Our first MILP formulation is obtained by exploiting the KKT conditions. We discuss how the nonlinear complementarity constraints can be linearized by employing binary variables. We also discuss how to obtain valid upper bounds for the big-$M$ parameters that arise from this linearization.

Given an instance of (StQP), if $x \in \Delta_n$ is a KKT point of (StQP), then $\nu(Q) = x^T Q x = \lambda$ by (\ref{eq1}), (\ref{eq2}), and (\ref{eq5}) (see also~\cite{GT73}). Based on this simple observation, (StQP) can be equivalently formulated as the following linear program with complementarity constraints:

 \begin{equation*}
  \begin{array}{llrcl}   
  \textrm{(LPCC1)} & \min & \lambda & & \\
   & & Q x - \lambda e - s & = & 0, \\
 & & e^T x & = & 1, \\
 & & x_j s_j & = & 0, \quad j = 1,\ldots,n, \\
 & & x & \geq & 0, \\
 & & s & \geq & 0. 
  \end{array}
  \end{equation*}

We can linearize the nonconvex complementarity constraints in (LPCC1) by using binary variables and big-$M$ constraints, which gives rise to the following MILP reformulation of (StQP):


 \begin{alignat}{10}
\label{obj1} & \textrm{(MILP1)} & \quad \min && \lambda & && \\
 \nonumber
 & &  \quad \textrm{s.t.} && & && \\
\label{c1}
   & & && Q x - \lambda e - s & = && \, \, 0, \\
\label{c2}   
 & & && e^T x & = && \, \, 1, \\
\label{c3}
 & & && x_j & \leq && \, \, y_j, \quad j = 1,\ldots,n, \\
 \label{c4}
 & & && s_j & \leq && \, \, M_j (1 - y_j),  \quad j = 1,\ldots,n, \\
\label{c5} 
 & & && x & \geq && \, \, 0, \\
\label{c6}
 & & && s & \geq && \, \, 0, \\
\label{c7} 
 & & && y_j & \in && \, \, \{0,1\}, \quad j = 1,\ldots,n.
  \end{alignat}

Note that, by \eqref{c3} and \eqref{c4}, the binary variable $y_j$ ensures that $x_j$ and $s_j$ cannot simultaneously be positive for each $j = 1,\ldots,n$. In particular, if $y_j = 1$, then $s_j = 0$ by (\ref{c4}) and $x_j$ is allowed to be positive. Since $x \in \Delta_n$, we have $0 \leq x_j \leq 1$, which implies that (\ref{c3}) yields a valid upper bound on $x_j$. On the other hand, if $y_j = 0$, we have $x_j = 0$ by \eqref{c3}, in which case we need valid upper bounds on the variable $s_j$ in (\ref{c4}). 

Next, we discuss how to obtain bounds for the big-$M$ parameters employed in (MILP1). By \eqref{c1}, 
 \begin{equation} \label{def_sj}
  s_j = e_j^T Q x - \lambda, \quad j = 1,\ldots,n.
 \end{equation}

We can obtain an upper bound on $s_j$ by deriving an upper bound for each of the terms on the right-hand side of \eqref{def_sj}. For the first term, since $x \in \Delta_n$, we have 
\begin{equation} \label{ub_ejQx}
e_j^T Q x = x^T Q e_j \leq \max_{i = 1,\ldots,n} Q_{ij}, \quad j = 1,\ldots,n.
\end{equation}
In order to bound the second term from above, any lower bound on $\lambda$ can be employed. Since $\lambda \geq \nu(Q)$ for any feasible solution of (MILP1), it follows that any lower bound on $\nu(Q)$ can be used to obtain an upper bound on $s_j,~j = 1,\ldots,n$. Indeed, let $\ell$ denote an arbitrary lower bound on $\nu(Q)$. For any feasible solution $(x,y,s,\lambda)$ of (MILP1), it follows from (\ref{def_sj}) and (\ref{ub_ejQx}) that 
\[
s_j = e_j^T Q x - \lambda \leq \max_{i = 1,\ldots,n} Q_{ij} - \nu(Q) \leq \max_{i = 1,\ldots,n} Q_{ij} - \ell, \quad j = 1,\ldots,n,
\]
which implies that 
\begin{equation} \label{def_Mj}
M_j = \max_{i = 1,\ldots,n} Q_{ij} - \ell, \quad j = 1,\ldots,n
\end{equation}
would be a valid choice in (MILP1). In particular, we use $\ell \in \{\ell_1(Q),\ell_2(Q)\}$ in our computational experiments.

\subsection{An Alternative Formulation} \label{alt_milp}

In this section, we present an alternative MILP formulation. Given an instance of (StQP), we first derive an underestimating function and an overestimating function for the objective function. We then establish useful relations of these two functions, which form the basis of our second formulation.

We start with a lemma that presents an underestimating and an overestimating function for the objective function of (StQP), both of which depend on the support of a feasible solution given by \eqref{def_Px}.

\begin{lemma} \label{sandwich}
For any $Q \in \cS^n$ and $x \in \Delta_n$, we have
\begin{equation} \label{sandw}
\min_{j \in {\cal P}(x)} e_j^T Q x \leq x^T Q x \leq \max_{j \in {\cal P}(x)} e_j^T Q x,
\end{equation}
where ${\cal P}(x)$, given by \eqref{def_Px}, denotes the support of $x$. Furthermore, if $x \in \Delta_n$ is a KKT point of (StQP), then 
\begin{equation} \label{exact_sandwich}
\min_{j \in {\cal P}(x)} e_j^T Q x = x^T Q x = \max_{j \in {\cal P}(x)} e_j^T Q x.
\end{equation}
\end{lemma}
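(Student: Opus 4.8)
The plan is to write $x^T Q x$ as a convex combination of the quantities $e_j^T Q x$ over the support, and then sandwich that convex combination between its minimum and maximum. Concretely, since $x \in \Delta_n$, we have $x = \sum_{j \in {\cal P}(x)} x_j e_j$ with $x_j > 0$ for $j \in {\cal P}(x)$ and $\sum_{j \in {\cal P}(x)} x_j = 1$. Multiplying the identity $Qx = Qx$ on the left by $x^T$ and expanding gives
\[
x^T Q x = \sum_{j \in {\cal P}(x)} x_j \, e_j^T Q x,
\]
so $x^T Q x$ is precisely a convex combination (with weights $x_j$) of the numbers $e_j^T Q x$ for $j \in {\cal P}(x)$. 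A weighted average always lies between the smallest and largest of the averaged quantities, which yields the two-sided bound \eqref{sandw} immediately. This part is entirely routine; the only point worth stating carefully is that the sum can be restricted to ${\cal P}(x)$ because the omitted terms have $x_j = 0$.

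For the second assertion, I would use the KKT conditions \eqref{eq1}--\eqref{eq5}. The key observation is that for a KKT point, $e_j^T Q x = \lambda$ for every $j \in {\cal P}(x)$. Indeed, from \eqref{eq1} we have $Qx - \lambda e - s = 0$, i.e. $e_j^T Q x = \lambda + s_j$ for each $j$. The complementarity condition \eqref{eq5} together with $x_j > 0$ for $j \in {\cal P}(x)$ forces $s_j = 0$ on the support, so $e_j^T Q x = \lambda$ for all $j \in {\cal P}(x)$. Since the quantity being minimized and maximized over ${\cal P}(x)$ is the constant $\lambda$, both the min and the max equal $\lambda$. Finally, $x^T Q x = \lambda$ as well, either by invoking the convex-combination identity above (an average of constants equals that constant) or directly via $x^T Q x = x^T(\lambda e + s) = \lambda e^T x + x^T s = \lambda$, using \eqref{eq2} and $x^T s = \sum_j x_j s_j = 0$ from \eqref{eq5}. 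This establishes the chain of equalities \eqref{exact_sandwich}.

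I do not anticipate a genuine obstacle here; both parts are short consequences of convexity of the averaging and of complementary slackness. The one place to be slightly careful is the degenerate case where ${\cal P}(x)$ might be a singleton or where $s_j$ could be positive off the support — but neither affects the argument, since the bounds in \eqref{sandw} are taken only over $j \in {\cal P}(x)$, exactly the indices on which $s_j = 0$ at a KKT point. The main conceptual content is simply the identification of $x^T Q x$ as the support-weighted average of the ``vertex slopes'' $e_j^T Q x$, after which everything follows.
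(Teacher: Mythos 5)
Your proof is correct and follows essentially the same route as the paper: the two-sided bound comes from writing $x^T Q x = \sum_{j \in {\cal P}(x)} x_j \left(e_j^T Q x\right)$ as a convex combination of the values $e_j^T Q x$, and the equality at a KKT point comes from complementary slackness forcing $e_j^T Q x = \lambda$ for all $j \in {\cal P}(x)$, whence $x^T Q x = \lambda$ as well. There is nothing to add; your handling of the degenerate cases matches the paper's implicit treatment.
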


\begin{proof}
For any $Q \in \cS^n$ and $x \in \Delta_n$,
\[
x^T Q x = \left( \sum_{j=1}^n x_j e_j^T\right) Qx = \sum_{j=1}^n x_j \left( e_j^T Q x \right) = \sum_{j \in  {\cal P}(x)} x_j \left(e_j^T Q x \right),
\]
i.e., $x^T Q x$ is a convex combination of $e_j^T Q x$ for $j \in {\cal P}(x)$, from which \eqref{sandw} follows.

Let $x \in \Delta_n$ be a KKT point of (StQP). Then, by (\ref{eq1}) -- (\ref{eq5}), 
\begin{eqnarray*} 
e_j^T Q x & = & \lambda, \quad j \in {\cal P}(x), \\
e_j^T Q x & \geq & \lambda, \quad j \in {\cal Z}(x).
\end{eqnarray*}
Furthermore $x^T Q x = \sum\limits_{j \in  {\cal P}(x)} x_j \left(e_j^T Q x \right) = \lambda \left(\sum\limits_{j \in  {\cal P}(x)} x_j\right) = \lambda$, which establishes (\ref{exact_sandwich}).
\end{proof}

%
%
%

Using Lemma~\ref{sandwich}, we next present an alternative characterization of $\nu(Q)$.

\begin{proposition} \label{diff_char}
Given an instance of (StQP),
\begin{equation} \label{alt_char}
\nu(Q) = \min_{x \in \Delta_n} \max_{j \in {\cal P}(x)} e_j^T Q x.
\end{equation}
\end{proposition}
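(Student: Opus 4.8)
The plan is to prove the identity by two opposite inequalities, leaning almost entirely on Lemma~\ref{sandwich}. Writing $g(x) := \max_{j \in {\cal P}(x)} e_j^T Q x$ for the inner maximization, the claim becomes $\nu(Q) = \min_{x \in \Delta_n} g(x)$. For the inequality $\nu(Q) \le \min_{x \in \Delta_n} g(x)$, I would fix an arbitrary $x \in \Delta_n$ and apply the upper bound in \eqref{sandw}, which gives $x^T Q x \le g(x)$. Since $\nu(Q) = \min_{z \in \Delta_n} z^T Q z \le x^T Q x$, this yields $\nu(Q) \le g(x)$ for every $x \in \Delta_n$, and taking the minimum over $x$ gives the desired bound.

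For the reverse inequality $\nu(Q) \ge \min_{x \in \Delta_n} g(x)$, I would exhibit a single feasible point at which $g$ equals $\nu(Q)$. Because $\Delta_n$ is compact and $x \mapsto x^T Q x$ is continuous, (StQP) attains a global minimizer $x^*$; and because the constraints defining $\Delta_n$ are linear, constraint qualification holds at $x^*$, so $x^*$ is a KKT point. Applying the exactness statement \eqref{exact_sandwich} of Lemma~\ref{sandwich} at $x^*$ gives $g(x^*) = (x^*)^T Q x^* = \nu(Q)$, whence $\min_{x \in \Delta_n} g(x) \le g(x^*) = \nu(Q)$. Combining the two inequalities establishes \eqref{alt_char}.

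The argument is essentially immediate once Lemma~\ref{sandwich} is in hand, so there is no serious obstacle; the only points requiring care are bookkeeping ones. First, one should confirm that the outer minimum on the right-hand side is actually attained and not merely an infimum: the two inequalities together show that $x^*$ is a minimizer of $g$ with value $\nu(Q)$, even though $g$ itself may be discontinuous because ${\cal P}(x)$ jumps as components of $x$ reach zero. Second, one must justify that a global minimizer of (StQP) is a KKT point, which is exactly the constraint-qualification observation recorded at the start of Section~\ref{prelim}. No other estimates are needed.
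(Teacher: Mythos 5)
Your proposal is correct and follows essentially the same route as the paper's own proof: the upper bound in Lemma~\ref{sandwich} gives $\nu(Q) \leq \min_{x \in \Delta_n} \max_{j \in {\cal P}(x)} e_j^T Q x$, and evaluating at a global minimizer $x^*$ (which is a KKT point, so \eqref{exact_sandwich} applies) gives the reverse inequality. The extra remarks you include on attainment, compactness, and constraint qualification are just explicit versions of facts the paper records earlier and uses implicitly.
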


\begin{proof}
For any $x \in \Delta_n$, we have $x^T Q x \leq \max\limits_{j \in {\cal P}(x)} e_j^T Q x$ by Lemma~\ref{sandwich}, which implies that $\nu(Q) \leq \min \limits_{x \in \Delta_n} \max \limits_{j \in {\cal P}(x)} e_j^T Q x$. Conversely, let $x^* \in \Delta_n$ be an optimal solution of (StQP). Then, $x^*$ is a KKT point, which implies that $\nu(Q) = (x^*)^T Q x^* = \max \limits_{j \in {\cal P}(x^*)} e_j^T Q x^*$ by Lemma~\ref{sandwich}, which establishes the reverse inequality. 

\end{proof}

We are now in a position to propose an alternative MILP formulation based on the characterization \eqref{alt_char} stated in Proposition~\ref{diff_char}.

 \begin{alignat}{10}
\label{obj2} &  \textrm{(MILP2)} & \quad \min && \alpha & && \\
 \nonumber
 & &  \quad \textrm{s.t.} && & && \\
\label{co1}  
   & &  && e_j^T Q x  & \leq && \, \, \alpha + z_j, \quad j = 1,\ldots,n,\\
\label{co2}   
 & & && e^T x & = && \, \, 1, \\
\label{co3} 
 & & && x_j & \leq && \, \, y_j, \quad j = 1,\ldots,n, \\
\label{co4} 
 & & && z_j & \leq && \, \, U_j (1 - y_j),  \quad j = 1,\ldots,n, \\
\label{co5} 
 & & && x & \geq && \, \, 0, \\
\label{co6} 
 & & && z & \geq && \, \, 0, \\
\label{co7}
 & & && y_j & \in && \, \, \{0,1\}, \quad j = 1,\ldots,n.
 \end{alignat}

Note that the auxiliary variable $\alpha$ is introduced and used in (\ref{obj2}) and (\ref{co1}) to linearize the maximum function on the right-hand side of \eqref{alt_char} and the binary variables $y_j$ are employed in (\ref{co4}) to ensure that the maximum is restricted only to the linear functions corresponding to the support of $x$ in \eqref{co1}. Indeed, if $j \in {\cal P}(x)$, then $x_j > 0$, which forces $y_j = 1$ by \eqref{co3} and $z_j = 0$ by \eqref{co4}. Otherwise, \eqref{co1} is a redundant constraint since $z_j$ can then take a positive value. Note that we again rely on big-$M$ parameters $U_j$ in (\ref{co4}). The next proposition presents a valid bound for these parameters.

\begin{proposition} \label{equiv_milp}
Given an instance of (StQP), (MILP2) is an equivalent reformulation of (StQP) if 
\[
U_j\geq M_j, \quad j = 1,\ldots,n,
\]
where $M_j$ is defined as in (\ref{def_Mj}) and $\ell$ is any lower bound on $\nu(Q)$.
\end{proposition}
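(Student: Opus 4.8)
The plan is to show that, under the hypothesis $U_j \geq M_j$ with $M_j$ as in \eqref{def_Mj}, the optimal value of (MILP2) equals $\nu(Q)$, and that any optimal solution of (MILP2) yields a global minimizer of (StQP). The natural strategy is to relate (MILP2) directly to the characterization $\nu(Q) = \min_{x \in \Delta_n} \max_{j \in {\cal P}(x)} e_j^T Q x$ established in Proposition~\ref{diff_char}, using the big-$M$ constraints to enforce that $\alpha$ captures exactly the maximum over the support. First I would observe that for any $x \in \Delta_n$, the constraints \eqref{co3} and \eqref{co7} force $y_j = 1$ whenever $x_j > 0$, i.e.\ $j \in {\cal P}(x)$; then \eqref{co4} forces $z_j = 0$, so \eqref{co1} reduces to $e_j^T Q x \leq \alpha$ for every $j \in {\cal P}(x)$. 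Minimizing $\alpha$ subject to these active constraints gives $\alpha = \max_{j \in {\cal P}(x)} e_j^T Q x$, which matches the objective in \eqref{alt_char}.

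The two directions of the equivalence would then be handled as follows. For the inequality $\min(\textrm{MILP2}) \leq \nu(Q)$, I would take a global minimizer $x^* \in \Delta_n$ of (StQP), set $y_j = 1$ for $j \in {\cal P}(x^*)$ and $y_j = 0$ otherwise, put $z_j = 0$ on the support and $z_j = U_j$ off the support, and choose $\alpha = \max_{j \in {\cal P}(x^*)} e_j^T Q x^* = \nu(Q)$, where the last equality is Lemma~\ref{sandwich} applied at the KKT point $x^*$. The only thing to verify is that this is feasible for (MILP2): constraints \eqref{co2}, \eqref{co3}, \eqref{co5}, \eqref{co6}, \eqref{co7} are immediate, and for \eqref{co1} with $j \in {\cal Z}(x^*)$ we need $e_j^T Q x^* \leq \alpha + U_j$. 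This is exactly where the bound on $U_j$ enters: the derivation of $M_j$ in the previous section shows $e_j^T Q x \leq \max_i Q_{ij} - \ell + \lambda$ type estimates, and since $\alpha = \nu(Q) \geq \ell$ and $U_j \geq M_j = \max_i Q_{ij} - \ell$, one gets $\alpha + U_j \geq \nu(Q) + \max_i Q_{ij} - \ell \geq \max_i Q_{ij} \geq e_j^T Q x^*$ using \eqref{ub_ejQx}.

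For the reverse inequality $\min(\textrm{MILP2}) \geq \nu(Q)$, I would take any feasible $(x,\alpha,y,z)$ and argue that $\alpha \geq \max_{j \in {\cal P}(x)} e_j^T Q x \geq \nu(Q)$, where the first inequality follows because $z_j = 0$ on the support (so \eqref{co1} gives $e_j^T Q x \leq \alpha$ there) and the second is Proposition~\ref{diff_char}. This shows every feasible objective value is at least $\nu(Q)$, completing the chain $\nu(Q) \leq \min(\textrm{MILP2})$ and hence equality.

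The main obstacle, and the only place the specific value of the bound is used, is the feasibility check of \eqref{co1} for indices off the support in the first direction: one must confirm that $U_j \geq M_j$ together with $\alpha \geq \ell$ suffices to dominate $e_j^T Q x^*$ for $j \in {\cal Z}(x^*)$. I expect this to be a short computation reusing \eqref{ub_ejQx} and \eqref{def_Mj} verbatim, but it is the crux, since it is precisely the requirement that makes the big-$M$ constraints non-binding exactly on the complement of the support while remaining valid. A secondary subtlety worth a sentence is the degenerate possibility that $U_j = \infty$ (when $\ell$ yields an infinite $M_j$), which is harmless since the constraint \eqref{co1} then becomes vacuous for $j \in {\cal Z}(x^*)$; otherwise the finite arithmetic above applies directly.
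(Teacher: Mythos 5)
Your proposal is correct and follows essentially the same route as the paper: the crux in both is the identical big-$M$ validity computation combining \eqref{ub_ejQx} with $\ell \leq \nu(Q)$ to show the off-support constraints \eqref{co1} can be satisfied, together with the observation that on the support $y_j = 1$ forces $z_j = 0$ so that $\alpha \geq \max_{j \in {\cal P}(x)} e_j^T Q x$. The only cosmetic difference is that the paper constructs a feasible point of (MILP2) for \emph{every} $x \in \Delta_n$ (with $\alpha = \max_{j \in {\cal P}(x)} e_j^T Q x$, exact at KKT points) and lets the equivalence follow from Lemma~\ref{sandwich}, whereas you instantiate the construction only at a global minimizer and argue the two inequalities explicitly; also, setting $z_j = U_j$ off the support implicitly uses $U_j \geq 0$ (immediate since $\max_i Q_{ij} \geq Q_{jj} \geq \nu(Q) \geq \ell$), which the paper's choice $z_j = \max\{0, e_j^T Q x - \alpha\}$ avoids having to check.
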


\begin{proof}
Let $x \in \Delta_n$. Then, for each $j \in {\cal P}(x)$, we have $y_j = 1$ by \eqref{co3}, which implies that $z_j = 0$ by \eqref{co4} and $\alpha \geq \max\limits_{j \in {\cal P}(x)} e_j^T Q x \geq x^T Q x$ by (\ref{co1}) and by Lemma~\ref{sandwich}. Since the objective function minimizes $\alpha$, the best choice of $\alpha$ would be given by $\alpha = \max\limits_{j \in {\cal P}(x)} e_j^T Q x$. 

Consider an index $j \not \in {\cal P}(x)$. Let us define $z_j = \max\{0, e_j^T Q x - \alpha\} \geq 0$. Then, if $e_j^T Q x - \alpha < 0$, we have $z_j = 0 \leq M_j$, which satisfies the constraints of (MILP2). Otherwise,
\[
z_j = e_j^T Q x - \alpha \leq \max_{i = 1,\ldots,n} Q_{ij} - \alpha \leq \max_{i = 1,\ldots,n} Q_{ij} - x^T Q x \leq \max_{i = 1,\ldots,n} Q_{ij} - \nu(Q),
\]
which implies that $z_j \leq \max\limits_{i = 1,\ldots,n} Q_{ij} - \ell = M_j$, where $\ell$ is any lower bound on $\nu(Q)$. It follows that for each $x \in \Delta_n$, we can construct $y \in \R^n$, $z \in \R^n$, and $\alpha \in \R$ such that $\alpha = \max\limits_{j \in {\cal P}(x)} e_j^T Q x \geq x^T Q x$. By Lemma~\ref{sandwich}, if $x \in \Delta_n$ is a KKT point, then we can choose $\alpha = x^T Q x$. The equivalence follows. 

\end{proof}

We close this section by a brief comparison of (MILP1) and (MILP2). Note that the constraint set (\ref{co1}) in (MILP2) can be rewritten as 
\[
Qx - z - \alpha e \leq 0.
\]
Identifying the variables $z$ with $s$ and $\alpha$ with $\lambda$, a comparison with (\ref{c1}) in (MILP1) reveals that (MILP2) is in fact a relaxation of (MILP1).  It follows that, for any feasible solution $(x,y,s,\lambda)$ of (MILP1), we can define $z = s$ and $\alpha = \lambda$ so that $(x,y,z,\alpha)$ is a feasible solution of (MILP2). On the other hand, while each feasible solution $(x,y,s,\lambda)$ of (MILP1) corresponds to a KKT point of (StQP), we can construct a feasible solution $(x,y,z,\alpha)$ for {\em any} $x \in \Delta_n$ such that $\alpha \geq x^T Q x$, with equality if $x$ is a KKT point of (StQP). Therefore, (MILP2) can be viewed as an {\em exact relaxation} of (MILP1).

\subsection{Valid Inequalities}

In this section, we present a set of inequalities that are valid for both formulations (MILP1) and (MILP2).

Given an instance of (StQP), Theorem~\ref{clique_thm} presents a relation between the support of an optimal solution and the convexity graph of $Q$. This relation gives rise to the following theorem.

\begin{theorem}
The following inequalities are valid for both formulations (MILP1) and (MILP2):
\begin{equation} \label{valid_ineq}
y_i + y_j \leq 1, \quad \quad 1 \leq i < j \leq n ~~\textrm{s.t.}~~Q_{ii} + Q_ {jj} - 2 Q_{ij} \leq 0.
\end{equation}
\end{theorem}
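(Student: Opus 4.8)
The plan is to read the proposed cuts through the lens of Theorem~\ref{clique_thm}. Observe first that the index set of \eqref{valid_ineq} is precisely the set of \emph{non-edges} of the convexity graph $G = (V,E)$: by definition $(i,j) \in E$ iff $Q_{ii} + Q_{jj} - 2Q_{ij} > 0$, so the side condition $Q_{ii} + Q_{jj} - 2Q_{ij} \leq 0$ characterizes exactly those pairs $\{i,j\}$ that are \emph{not} adjacent in $G$. Accordingly, the sense in which I would establish validity is that augmenting either (MILP1) or (MILP2) with \eqref{valid_ineq} preserves the optimal value, i.e.\ at least one optimal solution survives the cuts; note that \eqref{valid_ineq} is \emph{not} satisfied by every feasible point, since a KKT point whose support contains two non-adjacent nodes would force $y_i = y_j = 1$ through \eqref{c3}.

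First I would invoke Theorem~\ref{clique_thm} to obtain a global minimizer $x^* \in \Delta_n$ whose support $\cP(x^*)$ is a clique in $G$. Because the constraints of (StQP) are linear, $x^*$ is a KKT point, so there exist $s^* \in \R^n_+$ and $\lambda^* \in \R$ satisfying \eqref{eq1}--\eqref{eq5}, with $\lambda^* = (x^*)^T Q x^* = \nu(Q)$. I would then set $y_j^* = 1$ for $j \in \cP(x^*)$ and $y_j^* = 0$ otherwise, and check that $(x^*, y^*, s^*, \lambda^*)$ is feasible for (MILP1): constraint \eqref{c3} holds because $x_j^* \leq 1 = y_j^*$ on the support and $x_j^* = 0 = y_j^*$ off it; constraint \eqref{c4} holds because complementarity \eqref{eq5} gives $s_j^* = 0$ whenever $y_j^* = 1$, while the choice of big-$M$ in \eqref{def_Mj} guarantees $s_j^* \leq M_j$ whenever $y_j^* = 0$. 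Since (MILP2) is a relaxation of (MILP1), the induced point $(x^*, y^*, z^*, \alpha^*)$ with $z^* = s^*$ and $\alpha^* = \lambda^*$ is likewise feasible for (MILP2), attaining the same objective value $\nu(Q)$.

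The crux is the clique property. Fix any pair with $1 \leq i < j \leq n$ and $Q_{ii} + Q_{jj} - 2Q_{ij} \leq 0$, so that $\{i,j\} \notin E$. If we had both $i \in \cP(x^*)$ and $j \in \cP(x^*)$, then $\cP(x^*)$ would contain a pair of non-adjacent nodes, contradicting that it is a clique. Hence at most one of $i,j$ belongs to $\cP(x^*)$, whence $y_i^* + y_j^* \leq 1$. Thus the feasible solution $(x^*, y^*, s^*, \lambda^*)$ of (MILP1)---and its counterpart for (MILP2)---satisfies every inequality in \eqref{valid_ineq} while achieving the optimal value $\nu(Q)$, so adjoining \eqref{valid_ineq} leaves both optimal values unchanged.

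I expect the main obstacle to be conceptual rather than computational: one must pin down that validity here means preservation of \emph{an} optimal solution, not of \emph{all} feasible integer points, precisely because non-clique-supported KKT points may violate \eqref{valid_ineq}. The feasibility check for (MILP1) is routine once the big-$M$ bound \eqref{def_Mj} is in hand, and the transfer to (MILP2) is immediate from the relaxation relationship already established in the preceding subsection; the only genuine input is Theorem~\ref{clique_thm}, whose clique conclusion translates by contraposition exactly into the stated cuts.
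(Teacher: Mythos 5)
Your proof is correct and follows essentially the same route as the paper's: both arguments rest on Theorem~\ref{clique_thm} and interpret validity as preservation of at least one globally optimal solution whose clique-structured support forces $y_i + y_j \leq 1$ for every non-edge of the convexity graph. The only difference is one of detail: the paper compresses the feasibility of the induced point $(x^*,y^*,s^*,\lambda^*)$ (and its (MILP2) counterpart) into ``the assertion follows,'' whereas you verify the KKT multipliers, the big-$M$ bound \eqref{def_Mj}, and the transfer to (MILP2) explicitly.
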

\begin{proof}
In both formulations (MILP1) and (MILP2), the binary variables $y_j$ are equal to one if $j \in {\cal P}(x)$ for any feasible solution $x \in \Delta_n$. By Theorem~\ref{clique_thm}, there exists a global solution of (StQP) whose support set forms a stable set in the complement of the convexity graph $G = (V,E)$ of $Q$. The assertion follows.
\end{proof}

\section{Computational Results} \label{comp}

We report the results of our computational experiments in this section. We first describe the set of instances used in our experiments. Then, we explain our experimental setup in detail. Finally, we report performances of the proposed MILP formulations in comparison with several other global solution approaches from the literature.

\subsection{Set of Instances}

In an attempt to accurately assess the performances of the MILP formulations (MILP1) and (MILP2), we conducted extensive experiments on the following set of instances from the literature:

\begin{enumerate}

\item[(i)] {\em BLST Instances}~\cite{BLST16}: This set consists of 150 instances\footnote{Publicly available at {\tt http://or.dei.unibo.it/library/msc}} with $n = 30$ (BLST30) and 150 instances with $n = 50$ (BLST50). Each entry of $Q$ is randomly generated from a triangular distribution with parameters $a < c < b$, where $a$ and $c$ denote the minimum and maximum values and $c$ is the mode of the distribution.

\item[(ii)] {\em ST Instances}~\cite{ST08}: This set consists of 24 instances\footnote{Publicly available at {\tt http://www.iasi.cnr.it/{\textasciitilde}liuzzi/StQP/}} with $n = 100$ (ST100), 18 instances with $n = 200$ (ST200), 11 instances with $n = 500$ (ST500), and 1 instance with $n = 1000$ (ST1000). For each of these instances, the matrix $Q$ is randomly generated so that its convexity graph $G = (V,E)$ has a prespecified density. 

\item[(iii)] {\em DIMACS Instances}: It is well-known that the maximum stable set problem in graph theory can be formulated as an instance of (StQP)~\cite{MS65}. This set consists of (StQP) instances obtained from the complements of the 30 instances of the maximum clique problem\footnote{Publicly available at {\tt http://wwwdimacs.dimacs.rutgers.edu/pub/challenge/graph/benchmarks/clique/}} from the Second DIMACS Implementation Challenge with $n \in [28,300]$. These instances are divided into two groups based on the number of vertices. DIMACS1 consists of 8 instances with $n \in [28,171]$ and DIMACS2 is comprised of 22 instances with $n \in [200,300]$.

\item[(iv)] {\em BSU Instances}~\cite{BSU18}: This set consists of 20 ``hard'' instances with $n \in [5,24]$. Each of these instances is specifically constructed to harbor an exponential number of strict local minimizers. In particular, the number of strict local minimizers varies between $1.38^n$ and $1.49^n$.

\end{enumerate}

Note that Theorem~\ref{clique_thm} establishes a relation between the support of a global minimizer of an instance of (StQP) and the set of cliques of the associated convexity graph $G = (V,E)$. Denoting the density of the convexity graph by $\delta \in [0,1]$, it follows that the number of cliques in $G$ tends to increase as $\delta$ increases. Therefore, instances of (StQP) with larger values of $\delta$ contain a larger number of possible support sets for a global minimizer. Indeed, this difficulty is also reflected in earlier computational experiments (see, e.g.~\cite{ST08,LMP17}). It is also worth mentioning that the number of valid inequalities \eqref{valid_ineq} is given by $(1 - \delta) n (n-1)/2$. Therefore, for fixed $n$, the number of valid inequalities decreases as $\delta$ increases. For each set of instances, we therefore report the range of the parameter $\delta$. 

Recall that, for an instance of (StQP), if the minimum entry of $Q$ lies along the main diagonal, then $\nu(Q)$ equals that entry by Lemma~\ref{lem1}(iv). Therefore, we use this criterion as a preprocessing step in order to eliminate trivial instances. Apart from this, we do not use any other preprocessing procedure. After eliminating such trivial instances, we obtain a test bed that consists of a total of 376 instances. We summarize the statistics on the set of instances in Table~\ref{tab_ins_sum}.

\begin{table}[ht]
\begin{scriptsize}
\begin{center}
\begin{tabular}{|c||c|c|c|}
\hline
Instance Family & Number of Instances & $n$ & $\delta$ \\
\hline
\hline
BLST30 & 134 & 30 & [0.10,1.00] \\
BLST50 & 138 & 50 & [0.11,1.00] \\
\hline
ST100 & 24 & 100 & [0.25,0.90] \\
ST200 & 18 & 200 & [0.25,0.75] \\
ST500 & 11 & 500 & [0.25,0.50] \\
ST1000 & 1 & 1000 & 0.25 \\
\hline
DIMACS1 & 8 & [28,171] & [.35,.93] \\
DIMACS2 & 22 & [200,300] & [.08,.97] \\
\hline
BSU & 20 & [5,24] & [.6,1] \\
  \hline
 \end{tabular}
 \caption{Summary of Instances}
 \label{tab_ins_sum}
 \end{center}
 \end{scriptsize}
 \end{table}

As illustrated by Table~\ref{tab_ins_sum}, our test bed encompasses a large number of instances of (StQP) with varying sizes and characteristics. In particular, we note the higher density of the convexity graphs associated with the hard BSU instances. Indeed, 14 instances in this set have a density of 1; 5 instances have densities between 0.92 and 0.95; and only one instance has a density of 0.6, supporting our previous observation regarding the correlation between the difficulty of an instance and the density of the associated convexity graph.  
 
\subsection{Experimental Setup} 
 
Note that we propose two MILP formulations (MILP1) and (MILP2). For each formulation, one can use the lower bound $\ell_1(Q)$ or $\ell_2(Q)$. Finally, for each choice of the lower bound, we have the option of adding the valid inequalities \eqref{valid_ineq} or not, which implies that we have a total of 8 variants. For each variant with (MILP1), we add the following bound constraints on the variable $\lambda$:
\begin{equation} \label{bounds_lambda}
\ell_1(Q) \leq  \lambda \leq \min_{k = 1,\ldots,n} Q_{kk},
\end{equation}
where the upper bound follows from Lemma~\ref{lem1}(iv). Similarly, the corresponding constraints are added for each variant with (MILP2):
\begin{equation} \label{bounds_alpha}
\ell_2(Q) \leq \alpha \leq \min_{k = 1,\ldots,n} Q_{kk}.
\end{equation}

We compare the performances of our MILP formulations with three other global solution approaches, namely, the MILP formulation of~\cite{XVZ15}, which is publicly available at {\tt https://github.com/xiawei918/quadprogIP}, the quadratic programming (QP) solver of CPLEX, and the nonlinear programming (NLP) solver BARON. 

We solved all MILP formulations in MATLAB (version R2017b) using CPLEX (version 12.8.0) with the Cplex Class API provided in CPLEX for MATLAB Toolbox. Similarly, the QP solver of CPLEX was called in MATLAB using the same API. The NLP solver BARON (version 17.4.1) was called from GAMS (version 24.8.5) using the MATLAB interface. The computation of $\ell_2(Q)$ requires a semidefinite programming solver. For that purpose, we employed MOSEK (version 8.1.0.49) using the YALMIP interface (version R20180413)~\cite{Lof04}. 

We measured the running times in terms of wall clock time. In our experiments with CPLEX and BARON, we imposed a time limit of 3600 seconds for each MILP and QP problem. No time limit was imposed on MOSEK for the computation of $\ell_2(Q)$. Our computational experiments were carried out on a 64-bit HP workstation with 24 threads (2 sockets, 6 cores per socket, 2 threads per core) running Ubuntu Linux with 48 GB of RAM and Intel Xeon CPU E5-2667 processors with a clock speed of 2.90 GHz. In our experiments with CPLEX, we chose the deterministic parallel mode by setting {\tt cplex.parallelmode = 1} in order to have reproducible results. We remark that both CPLEX and MOSEK can take advantage of multiple threads. Similarly, we employed {\tt option threads = 24} in GAMS. However, we noticed that the wall clock time and the CPU time reported by BARON were virtually identical on all instances, suggesting that BARON did not take advantage of the multiple threads. Therefore, we caution the reader about the interpretation of the run times in our experiments with BARON.   

In CPLEX and BARON, we set the optimality gap tolerance to $10^{-6}$, which is given by
\begin{equation} \label{gap_def}
\frac{|{\tt bestbound} - {\tt bestsolution}|}{10^{-10} + |{\tt bestsolution}|}.
\end{equation}
The default settings were employed for all the other parameters of CPLEX, BARON, and MOSEK. 

We denote by MILP1-L1 and MILP1-L1-VI the MILP formulation (MILP1) using the lower bound $\ell_1(Q)$ with and without the set of valid inequalities \eqref{valid_ineq}, respectively. We replace the suffix L1 by L2 for the lower bound $\ell_2(Q)$. We use a similar convention for (MILP2). The MILP formulation of~\cite{XVZ15} is denoted by QP-IP, whereas CPLEX QP and BARON refer to the QP formulations solved by CPLEX and BARON, respectively. The doubly nonnegative relaxation (DNN) is denoted by DNN. For our MILP formulations with the lower bound $\ell_2(Q)$, the solution times exclude the computational effort for solving the corresponding doubly nonnegative relaxation (DNN) required to compute this lower bound, which is reported separately. Finally, for a solution $x \in \Delta_n$ reported by a solver, an index $j \in \{1,\ldots,n\}$ is considered to be in the support of $x$ (i.e., $j \in {\cal P}(x)$) if $x_j >  10^{-8}$.

\subsection{BLST Instances}

In this section, we report the performances on the BLST instances~\cite{BLST16}. Recall that BLST30 consists of 134 instances with $n = 30$ and BLST50 is comprised of 138 instances with $n = 50$. We report the summary of the results in Tables~\ref{blst30} and~\ref{blst50} for the BLST30 and BLST50 instances, respectively. In each table, we have a row for each of the eight variants using (MILP1) and (MILP2), and a row for each of QP-IP, CPLEX QP, BARON, and DNN. The total time taken by each approach (in seconds) over {\em all} instances in each set is reported in the second column. The third column reports the number of instances solved to optimality within the time limit of 3600 seconds. We present the number of instances on which the corresponding approach hits the time limit and the average optimality gap defined as in \eqref{gap_def} over these instances in the fourth and fifth columns, respectively. Recall that we do not impose any time limit for solving the DNN relaxation.


\begin{table}[!ht]
\begin{scriptsize}
\begin{center}
\begin{tabular}{|c||c||c||c|c|}
\hline
 & Total Time & OPT & Time Limit & Average Gap \\
\hline
\hline
MILP1-L1 & 26.97 & 134 & 0 & -- \\
MILP1-L1-VI & 30.71 & 134 & 0 & -- \\
MILP2-L1 & 30.50 & 134 & 0 & -- \\
MILP2-L1-VI & 33.80 & 134 & 0 & -- \\
\hline
MILP1-L2 & 34.67 & 134 & 0 & -- \\
MILP1-L2-VI & 24.10 & 134 & 0 & -- \\
MILP2-L2 & 25.65 & 134 & 0 & -- \\
MILP2-L2-VI & 29.28 & 134 & 0 & -- \\
\hline
QP-IP & 38.10 & 134 & 0 & --\\
CPLEX QP & 39370.66 & 124 & 10 & 6.14\% \\
BARON & 188005.68 & 96 & 38 & 115.91\% \\
\hline
DNN & 14.33 & 134 & -- & -- \\
  \hline
 \end{tabular}
 \caption{Performance on BLST30 (134 instances; $n = 30$)}
 \label{blst30}
 \end{center}
 \end{scriptsize}
 \end{table}

 
 \begin{table}[!h]
\begin{scriptsize}
\begin{center}
\begin{tabular}{|c||c||c||c|c|}
\hline
 & Total Time & OPT & Time Limit & Average Gap \\
\hline
\hline
MILP1-L1 & 50.30 & 138 & 0 & -- \\
MILP1-L1-VI & 50.46 & 138 & 0 & -- \\
MILP2-L1 & 49.91 & 138 & 0 & -- \\
MILP2-L1-VI & 55.54 & 138 & 0 & -- \\
\hline
MILP1-L2 & 45.06 & 138 & 0 & -- \\
MILP1-L2-VI & 41.32 & 138 & 0 & -- \\
MILP2-L2 & 30.48 & 138 & 0 & -- \\
MILP2-L2-VI & 35.25 & 138 & 0 & -- \\
\hline
QP-IP & 85.85 & 138 & 0 & --\\
CPLEX QP & 77072.98 & 124 & 14 & 13.74\% \\
BARON & 431110.51 & 19 & 95 & 246.14\% \\
\hline
DNN & 99.44 & 138 & -- & -- \\
  \hline
 \end{tabular}
 \caption{Performance on BLST50 (138 instances; $n = 50$)}
 \label{blst50}
 \end{center}
 \end{scriptsize}
 \end{table}

On each of the BLST30 and BLST50 instances, the lower bound $\ell_2(Q)$ (i.e., the optimal value of (DNN)) is either equal to $\nu(Q)$ or the difference between the two is at most $10^{-6}$, whereas the simple lower bound $\ell_1(Q)$ is always smaller than $\nu(Q)$. Therefore, $\ell_2(Q)$ is significantly tighter than $\ell_1(Q)$ over all instances. The support sizes of optimal solutions are in the range $[1,10]$.

As illustrated by Tables~\ref{blst30} and~\ref{blst50}, our MILP formulations as well as QP-IP can solve each instance to optimality in a fraction of a second on average. On the other hand, each of CPLEX-QP and BARON hits the time limit on some of the instances, with CPLEX QP exhibiting a better performance than BARON. Note that the average gaps of BARON are usually considerably higher than those reported by CPLEX-QP. While the total computational effort for solving the DNN relaxation is quite negligible on BLST30, it increases significantly on BLST50, even surpassing the total time required for solving each MILP formulation. On each set, the total time taken by each of CPLEX QP and BARON is significantly larger than that required for each MILP formulation.

Each of the eight variants of (MILP1) and (MILP2) slightly outperforms QP-IP on BLST30. The improvement is more significant on BLST50. Furthermore, while the performance of each of the eight variants is similar on BLST30, (MILP2) seems to outperform (MILP1) on BLST50, especially when used with the lower bound $\ell_2(Q)$. Note that the valid inequalities do not seem to have a significant effect on these instances. Finally, we remark that the total time of each of our MILP formulations on BLST50 increases only modestly in comparison with BLST30. 
 
\subsection{ST Instances}

In this section, we report our computational results on ST instances~\cite{ST08}, which consists of 24 instances with $n = 100$ (ST100), 18 instances with $n = 200$ (ST200), 11 instances with $n = 500$ (ST500), and one instance with $n = 1000$ (ST1000). We report our results in Tables~\ref{st100},~\ref{st200},~\ref{st500}, and~\ref{st1000} for ST100, ST200, ST500, and ST1000, respectively, each of which is organized similarly to Table~\ref{blst30}. Note that the DNN relaxation could not be solved for any instance in ST500 and ST1000. We therefore omit the rows corresponding to the variants of our MILP formulations with the lower bound $\ell_2(Q)$ and the DNN relaxation in Tables~\ref{st500} and~\ref{st1000} accordingly. 


\begin{table}[!h]
\begin{scriptsize}
\begin{center}
\begin{tabular}{|c||c||c||c|c|}
\hline
 & Total Time & OPT & Time Limit & Average Gap \\
\hline
\hline
MILP1-L1 & 38.21 & 24 & 0 & -- \\
MILP1-L1-VI & 52.95 & 24 & 0 & -- \\
MILP2-L1 & 20.78 & 24 & 0 & -- \\
MILP2-L1-VI & 28.16 & 24 & 0 & -- \\
\hline
MILP1-L2 & 44.22 & 24 & 0 & -- \\
MILP1-L2-VI & 63.88 & 24 & 0 & -- \\
MILP2-L2 & 10.84 & 24 & 0 & -- \\
MILP2-L2-VI & 16.87 & 24 & 0 & -- \\
\hline
QP-IP & 135.01 & 24 & 0 & --\\
CPLEX QP & 70894.32 & 6 & 18 & 38.18\% \\
BARON & 86404.50 & 0 & 24 & 1131.58\% \\
\hline
DNN & 388.39 & 24 & -- & -- \\
  \hline
 \end{tabular}
 \caption{Performance on ST100 (24 instances; $n = 100$)}
 \label{st100}
 \end{center}
 \end{scriptsize}
 \end{table}


\begin{table}[!h]
\begin{scriptsize}
\begin{center}
\begin{tabular}{|c||c||c||c|c|}
\hline
 & Total Time & OPT & Time Limit & Average Gap \\
\hline
\hline
MILP1-L1 & 576.13 & 18 & 0 & -- \\
MILP1-L1-VI & 512.49 & 18 & 0 & -- \\
MILP2-L1 & 189.45 & 18 & 0 & -- \\
MILP2-L1-VI & 226.08 & 18 & 0 & -- \\
\hline
MILP1-L2 & 463.57 & 18 & 0 & -- \\
MILP1-L2-VI & 417.84 & 18 & 0 & -- \\
MILP2-L2 & 37.96 & 18 & 0 & -- \\
MILP2-L2-VI & 124.83 & 18 & 0 & -- \\
\hline
QP-IP & 1197.43 & 18 & 0 & --\\
CPLEX QP & 64804.65 & 0 & 18 & 44.63\% \\
BARON & 64807.69 & 0 & 18 & 4249.18\% \\
\hline
DNN & 7729.80 & 18 & -- & -- \\
  \hline
 \end{tabular}
 \caption{Performance on ST200 (18 instances; $n = 200$)}
 \label{st200}
 \end{center}
 \end{scriptsize}
 \end{table}

We first focus on the instances in ST100 and ST200. On each of these instances, the difference between the lower bound $\ell_2(Q)$ and $\nu(Q)$ is less than $10^{-5}$. On the other hand, the simple lower bound $\ell_1(Q)$ is considerably smaller than $\nu(Q)$ on all instances. The support sizes of optimal solutions vary between 3 and 7.

A close examination of Tables~\ref{st100} and~\ref{st200} reveals that MILP formulations significantly outperform each of CPLEX QP and BARON on ST100 and ST200. In particular, all of the instances in these two sets can be solved to optimality by each of our MILP formulations and by QP-IP, whereas CPLEX QP can solve only 6 instances out of 24 in ST100 to optimality and BARON can solve none within the time limit. Furthermore, BARON generally reports considerably higher optimality gaps in comparison with CPLEX QP. Note again that the total computational effort required for each MILP formulation is significantly smaller than that for CPLEX QP and BARON. The total computational effort for solving the DNN relaxation similarly exceeds the total effort required for each MILP formulation. In particular, the average solution time of DNN increases from 16.22 seconds on ST100 to 429.43 seconds on ST200, an increase of more than 25-fold despite the fact that $n$ only increases by a factor of 2.

Each of the eight variants of our MILP formulations outperforms QP-IP on both ST100 and ST200. MILP2-L2 stands out as a clear winner among the other MILP formulations. Note, in particular, that MILP-L2 is about 13 times faster than QP-IP on ST100 and more than 30 times faster on ST200. The lower bound $\ell_2(Q)$ seems to have a significantly positive effect on the performance of (MILP2), both with and without valid inequalities and, to a lesser extent, on the performance of (MILP1) on ST200. The inclusion of valid inequalities has a mixed effect on (MILP1) and (MILP2). It is worth mentioning that instances with sparse convexity graphs give rise to a larger number of valid inequalities, which may adversely affect the overall performance of the variants with valid inequalities.


\begin{table}[!h]
\begin{scriptsize}
\begin{center}
\begin{tabular}{|c||c||c||c|c|}
\hline
 & Total Time & OPT & Time Limit & Average Gap \\
\hline
\hline
MILP1-L1 & 6583.05 & 11 & 0 & -- \\
MILP1-L1-VI & 8960.10 & 11 & 0 & -- \\
MILP2-L1 & 2519.94 & 11 & 0 & -- \\
MILP2-L1-VI & 3802.81 & 11 & 0 & -- \\
\hline
QP-IP & 20291.34 & 6 & 5 & 6.97\% \\
CPLEX QP & 39603.91 & 0 & 11 & 44660285.88\% \\
BARON & 39600.13 & 0 & 11 & 11376.00\% \\
  \hline
 \end{tabular}
 \caption{Performance on ST500 (11 instances; $n = 500$)}
 \label{st500}
 \end{center}
 \end{scriptsize}
 \end{table}


\begin{table}[!h]
\begin{scriptsize}
\begin{center}
\begin{tabular}{|c||c||c||c|c|}
\hline
 & Total Time & OPT & Time Limit & Average Gap \\
\hline
\hline
MILP1-L1 & 3600.31 & 0 & 1 & 7.63\% \\
MILP1-L1-VI & 3609.65 & 0 & 1 & 46.74\% \\
MILP2-L1 & 2100.22 & 1 & 0 & -- \\
MILP2-L1-VI & 3600.18 & 0 & 1 & 11.59\% \\
\hline
QP-IP & 3607.88 & 0 & 1 & 22.05\% \\
CPLEX QP & 3600.95 & 0 & 1 & 86423971.55\% \\
BARON & 3601.03 & 0 & 1 & 18020.23\% \\
  \hline
 \end{tabular}
 \caption{Performance on ST1000 (1 instance; $n = 1000$)}
 \label{st1000}
 \end{center}
 \end{scriptsize}
 \end{table}

We now focus on the larger instances ST500 and ST1000. On each of these instances, $\ell_1(Q)$ again turns out to be a rather loose lower bound on $\nu(Q)$. The sizes of the support of optimal solutions are between 4 and 6. 

Tables~\ref{st500} and~\ref{st1000} report the results on ST500 and ST1000, respectively. Recall that the DNN relaxation cannot be solved on any of these instances. Therefore, we present the results only for the variants of our MILP formulations with the simple lower bound $\ell_1(Q)$. On ST500, while each of our formulations can solve all instances to optimality, QP-IP hits the time limit on 5 of the 11 instances. Furthermore, both CPLEX QP and BARON also hit the time limit on all of the instances. ST1000 consists of a single instance with $n = 1000$. We include this instance in our experiments to assess the performances of different global approaches on a very large sample instance. This instance indeed turns out to be very challenging for each approach except MILP2-L1, which manages to solve it to optimality within the time limit. We believe that this instance provides a remarkable computational evidence about the effectiveness of (MILP2), even when used with the simple and fairly loose lower bound $\ell_1(Q)$. We also point out the extremely large optimality gaps reported by the QP solvers, which illustrates that instances of this scale are well beyond the reach of current state-of-the-art QP and NLP solvers.

On ST500, a comparison of total computational effort indicates that our MILP formulations are far more effective than the other approaches. On ST1000, only MILP1-L1-VI reports a larger gap than that of QP-IP. Note that the valid inequalities do not seem to help on these two sets since the total number of such inequalities can be fairly large. For instance, the number of valid inequalities is about 375000 on the single instance in ST1000. On both of these sets, MILP2-L1 clearly outperforms all the other MILP formulations.

\subsection{DIMACS Instances}

In this section, we report our computational results on DIMACS instances, consisting of 8 instances in DIMACS1 with $n \in [28,171]$, and 22 instances in DIMACS2 with $n \in [200,300]$. Each of these instances is obtained from the reformulation of the maximum stable set problem as an instance of (StQP)~\cite{MS65}. We first review this formulation. 

Let $G = (V,E)$ be a simple, undirected graph, where $V = \{1,\ldots,n\}$. Recall that a set $S \subseteq V$ is a stable set if no two nodes in $S$ are connected by an edge. The maximum stable set problem is concerned with computing the largest stable set in $G$, whose size is denoted by $\alpha(G)$. 

By defining a binary variable $y_j \in \{0,1\}$ for each $j \in V$ to indicate whether node $j$ belongs to a maximum stable set, the maximum stable set problem can be formulated as an integer linear programming problem as follows:
\[
\textrm{(ILP)} \quad \alpha(G) = \max\left\{\sum\limits_{j=1}^n y_j: y_i + y_j \leq 1, \quad (i,j) \in E, \quad y_j \in \{0,1\}, \quad j = 1,\ldots,n\right\}.
\] 

Motzkin and Straus~\cite{MS65} proposed the following formulation in the form of (StQP):
\[
\textrm{(MS-StQP)} \quad \frac{1}{\alpha(G)} = \min\limits_{x \in \Delta_n} x^T(I + A_G)x,
\]
where $A_G \in {\cal S}^n$ denotes the adjacency matrix of $G$. Furthermore, if $S^* \subseteq V$ denotes a maximum stable set of $G$, then an optimal solution of (MS-StQP) is given by $x_j = 1/|S^*|$ if $j \in S^*$, and $x_j = 0$ otherwise. 

While we think that reformulating a combinatorial optimization problem as an instance of (StQP) and then reformulating it as an MILP problem may not necessarily be the best solution approach, we still include this set of instances in our experiments due to the existence of a global optimal solution of (MS-StQP) with a support of size $\alpha(G)$, which can be fairly large for certain classes of graphs. Therefore, these instances can be particularly challenging for MILP formulations of (StQP). 

Given an instance of (MS-StQP) corresponding to a graph $G = (V,E)$, it is easy to verify that the convexity graph of $Q = I + A_G$ (see Section~\ref{prop_opt_sol}) is precisely given by the complement of $G$. Since the valid inequalities \eqref{valid_ineq} are defined for each edge of the complement of the convexity graph, which coincides with $G$, it follows that the valid inequalities in our MILP formulations are given by 
\[
y_i + y_j \leq 1, \quad (i,j) \in E,
\]
which are precisely the same constraints as in (ILP). Therefore, our MILP formulations with valid inequalities can in some sense be viewed as extended formulations of (ILP).

We report our computational results on DIMACS1 and DIMACS2 in Tables~\ref{dimacs1} and~\ref{dimacs2}, respectively. For comparison purposes, we also include the performance of the integer linear programming formulation (ILP) on these instances, denoted by ILP. 


\begin{table}[!ht]
\begin{scriptsize}
\begin{center}
\begin{tabular}{|c||c||c||c|c|}
\hline
 & Total Time & OPT & Time Limit & Average Gap \\
\hline
\hline
MILP1-L1 & 3697.94 & 7 & 1 & 72.80\% \\
MILP1-L1-VI & 52.31 & 8 & 0 & -- \\
MILP2-L1 & 3624.48 & 7 & 1 & 72.80\% \\
MILP2-L1-VI & 59.72 & 8 & 0 & -- \\
\hline
MILP1-L2 & 3731.06 & 7 & 1 & 9.46\% \\
MILP1-L2-VI & 140.79 & 8 & 0 & -- \\
MILP2-L2 & 3616.89 & 7 & 1 & 9.46\% \\
MILP2-L2-VI & 51.82 & 8 & 0 & -- \\
\hline
QP-IP & 4863.33 & 7 & 1 & 100.00\% \\
CPLEX QP & 29449.29 & 0 & 8 & 61.79\% \\
BARON & 28804.96 & 0 & 8 & 83.13\% \\
ILP & 16.17 & 8 & 0 & -- \\
\hline
DNN & 356.74 & 8 & -- & -- \\
  \hline
 \end{tabular}
 \caption{Performance on DIMACS1 (8 instances; $n \in [28,171]$)}
 \label{dimacs1}
 \end{center}
 \end{scriptsize}
 \end{table}

 
 \begin{table}[ht]
\begin{scriptsize}
\begin{center}
\begin{tabular}{|c||c||c||c|c|}
\hline
 & Total Time & OPT & Time Limit & Average Gap \\
\hline
\hline
MILP1-L1 & 57676.31 & 7 & 15 & 81.91\% \\
MILP1-L1-VI & 21463.40 & 18 & 4 & 71.60\% \\
MILP2-L1 & 55529.78 & 7 & 15 & 82.38\% \\
MILP2-L1-VI & 25214.24 & 17 & 5 & 79.65\% \\
\hline
MILP1-L2 & 56047.95 & 7 & 15 & 18.29\% \\
MILP1-L2-VI & 27944.05 & 17 & 5 & 9.40\% \\
MILP2-L2 & 55127.62 & 8 & 14 & 20.60\% \\
MILP2-L2-VI & 30903.19 & 15 & 7 & 7.65\% \\
\hline
QP-IP & 61884.01 & 5 & 17 & 99.98\% \\
CPLEX QP & 77391.14 & 1 & 21 & 84.68\% \\
BARON & 75616.99 & 1 & 21 & 92.79\% \\
ILP & 4672.59 & 21 & 1 & 14.07\% \\
\hline
DNN & 25071.78 & 22 & -- & -- \\
  \hline
 \end{tabular}
 \caption{Performance on DIMACS2 (22 instances; $n \in [200,300]$)}
 \label{dimacs2}
 \end{center}
 \end{scriptsize}
 \end{table}

In contrast with BLST instances and ST instances, the lower bound $\ell_2(Q)$ is almost tight on 5 out of 8 instances in DIMACS1 and only on 6 out of 22 instances in DIMACS2. The lower bound $\ell_1(Q)$ is quite loose across all instances. The support size of an optimal solution is in the range of [4,44] and [8,128] in DIMACS1 and DIMACS2, respectively.

Tables~\ref{dimacs1} and~\ref{dimacs2} illustrate that these instances are indeed challenging for each of the global solution approaches. We discuss the computational results on DIMACS1 and DIMACS2 separately.

On DIMACS1, as illustrated by Table~\ref{dimacs1}, each variant of our MILP formulations without valid inequalities can solve 7 of the 8 instances to optimality within the time limit. The addition of the valid inequalities not only helps to close the gap on the remaining instance but also significantly improves the overall performance of both (MILP1) and (MILP2) regardless of the particular lower bound employed. Similarly, QP-IP can solve 7 out of 8 instances to optimality. Each of CPLEX QP and BARON hits the time limit on each of the 8 instances, with BARON reporting a higher average optimality gap compared to CPLEX QP. The total computational effort for solving the DNN relaxation is larger than the total solution time for each variant of our MILP formulations with valid inequalities. Finally, ILP can solve all of these instances to optimality fairly quickly. We conjecture that this favorable outcome can be attributed to CPLEX's capability of identifying the particular structure of this formulation and adding other well-known inequalities and cuts. 

Overall, each of our eight MILP variants requires less computational effort than QP-IP, either achieving significantly smaller solution times or terminating with smaller optimality gaps. While the performances of MILP1-L1-VI and MILP2-L1-VI are similar, MILP2-L2-VI exhibits a significantly better performance than MILP1-L2-VI. 

On DIMACS2, which consists of larger instances, Table~\ref{dimacs2} reveals that each approach, including ILP, hits the time limit on various subsets of instances, illustrating the challenging structure of these instances. Similar to DIMACS1, ILP achieves the best performance both in terms of the total time and the number of instances solved to optimality. In particular, ILP is terminated due to the time limit only on one instance out of 22. MILP1-L1-VI ranks in the second place, with the second smallest total time and the second largest number of instances solved to optimality, followed by MILP2-L1-VI and MILP1-L2-VI, respectively. Once again, QP-IP is outperformed by each of our eight MILP variants in terms of each of total time, number of instances solved to optimality, and average optimality gap. Both CPLEX QP and BARON can solve only one instance to optimality and hit the time limit on each of the remaining instances, with CPLEX QP reporting a slightly lower average gap compared to BARON. The total computational effort required for solving the DNN relaxation is quite significant, exceeding, for instance, the total solution time of MILP1-L1-VI.

We again observe significant improvements due to the addition of valid inequalities for both (MILP1) and (MILP2). Furthermore, it is worth noting that both formulations (MILP1) and (MILP2) significantly benefit in terms of average optimality gaps when used with the better lower bound $\ell_2(Q)$. 

\subsection{BSU Instances}

In this section, we report our computational results on the BSU instances. This set consists of one instance for each value of $n$ in the range $[5,24]$ and is specifically constructed to have an exponential number of strict local minimizers. We report our results in Table~\ref{bsu}, which is organized similarly to Table~\ref{blst30}.


\begin{table}[ht]
\begin{scriptsize}
\begin{center}
\begin{tabular}{|c||c||c|c||c|c||}
\hline
 & Total Time & OPT & Time Limit & Average Gap \\
\hline
\hline
MILP1-L1 & 6.75 & 20 & 0 & -- \\
MILP1-L1-VI & 5.99 & 20 & 0 & -- \\
MILP2-L1 & 8.77 & 20 & 0 & -- \\
MILP2-L1-VI & 7.39 & 20 & 0 & -- \\
\hline
MILP1-L2 & 7.02 & 20 & 0 & -- \\
MILP1-L2-VI & 4.92 & 20 & 0 & -- \\
MILP2-L2 & 7.04 & 20 & 0 & -- \\
MILP2-L2-VI & 7.45 & 20 & 0 & -- \\
\hline
QP-IP & 11.96 & 20 & 0 & --\\
CPLEX QP & 63194.15 & 5 & 15 & 14.82\% \\
BARON & 58022.22 & 5 & 15 & 54.53\% \\
\hline
DNN & 0.51 & 20 & -- & -- \\
  \hline
 \end{tabular}
 \caption{Performance on BSU (20 instances; $n \in [5,24]$)}
 \label{bsu}
 \end{center}
\end{scriptsize}
 \end{table}

The lower bound $\ell_2(Q)$ is tight only on one instance in this set, namely for $n = 6$. On the other hand, the lower bound $\ell_1(Q)$ is considerably weaker than $\ell_2(Q)$ on all instances. The sizes of the support of an optimal solution range between 2 and 13.

Table~\ref{bsu} reveals that each instance in this set can be solved by each MILP formulation in a fraction of a second on average, illustrating that the existence of an exponential number of strict local minimizers does not seem to hinder the performances of MILP formulations. In particular, each of our eight MILP variants slightly outperforms QP-IP on this set. On the other hand, each of CPLEX QP and BARON can solve only the five smallest instances to optimality within the time limit, each hitting the time limit on the remaining set of 15 instances. Therefore, these instances seem to be particularly challenging for QP and NLP solvers despite the small values of $n$. Note that DNN requires a negligible total computational effort on this set. 

This set of instances provides strong computational evidence that MILP formulations are less likely to be affected by the possibility of an exponential number of local minimizers, which, in general, poses a great challenge for general purpose QP and NLP solvers. These results illustrate that MILP formulations can be much more effective for solving standard quadratic programs in comparison with general purpose QP and NLP solvers.

\subsection{Overall Comparison}

In this section, we present an overall comparison of each of the global solution approaches on all of the instances in our test bed. In an attempt to make a fair comparison, we first divide the set of instances into two subsets. IS1 consists of all instances on which each global solution approach is attempted, i.e., the set of all instances excluding the sets ST500 and ST1000. We collect these two sets of larger instances in the set IS2. Recall that the DNN relaxation cannot be solved on IS2. 

Our first comparison is based on the summary of the performances of all of the global solution approaches on our test bed. In Table~\ref{overall}, we report the total time required by each approach as well as the number of instances solved to optimality and the number of instances terminated due to the time limit. We organize the results similarly as in Table~\ref{blst30}, except that we have a separate set of columns for each of IS1 and IS2. Furthermore, we do not include the average optimality gaps due to the high variability. 

\begin{table}[ht]
\begin{scriptsize}
\begin{center}
\begin{tabular}{|c||c|c|c||c|c|c|}
\hline
 & \multicolumn{3}{|c||}{IS1} & \multicolumn{3}{|c|}{IS2} \\
\hline
 & Total Time & OPT & Time Limit & Total Time & OPT & Time Limit  \\
\hline
\hline
MILP1-L1 & 62072.61 & 348 & 16 & 10183.36 & 11 & 1 \\
MILP1-L1-VI & 22168.31 & 360 & 4 &  12569.75 & 11 & 1 \\
MILP2-L1 & 59453.67 & 348 & 16 & 4620.16 & 12 & 0\\
MILP2-L1-VI & 25624.93 & 359 & 5 & 7402.99 & 11 & 1 \\
\hline
MILP1-L2 & 60373.55 & 348 & 16 &  -- & -- & --\\
MILP1-L2-VI & 28636.90 & 358 & 5 & -- & -- & -- \\
MILP2-L2 & 58846.48 & 349 & 15 & -- & -- & --\\
MILP2-L2-VI & 31168.69 & 357 & 7 & -- & -- & -- \\
\hline
QP-IP & 68215.69 & 346 & 18 & 23899.22 & 6 & 6 \\
CPLEX QP & 422177.19 & 260 & 104 & 43204.86 & 0 & 12 \\
BARON & 932772.55 & 121 & 243 &  43201.16 & 0 & 12 \\
\hline
DNN & 33660.99 & 364 & -- & -- & -- & -- \\
  \hline
 \end{tabular}
 \caption{Overall performance comparison (IS1: 364 instances; $n \in [5,300]$; IS2: 12 instances; $n \in [500,1000]$)}
 \label{overall}
 \end{center}
\end{scriptsize}
 \end{table}

We first focus on instances in IS1. Table~\ref{overall} reveals that MILP1-L1-VI dominates all the other approaches both in terms of the total time and the number of instances solved to optimality. MILP2-L1-VI ranks in the second place, followed closely by MILP1-L2-VI and MILP2-L2-VI, respectively. Each of our 8 MILP variants outperforms QP-IP on both performance metrics. CPLEX QP and BARON fall behind significantly, with CPLEX QP exhibiting better performance than BARON. The overall computational effort required for solving the DNN relaxation exceeds the total time required by each of MILP1-L1-VI, MILP2-L1-VI, MILP1-L2-VI, and MILP2-L2-VI. Note that the addition of valid inequalities improves both performance metrics on both formulations regardless of the lower bound employed. 

On IS1, while the use of the tighter lower bound $\ell_2(Q)$ leads to a slight improvement over $\ell_1(Q)$ using both formulations (MILP1) and (MILP2) without the valid inequalities, their inclusion seems to adversely affect the overall performance. We remark, however, that the addition of these inequalities almost always leads to smaller optimality gaps on instances terminated due to the time limit. Therefore, valid inequalities may potentially help to close the optimality gap faster if a larger time limit is imposed.

Note that IS2 consists of 12 larger instances. On this set, MILP2-L1 seems to outperform all other approaches in terms of both performance metrics, which is followed, in turn, by MILP2-L1-VI, MILP1-L1, and MILP1-LI-VI, respectively. Once again, each of our MILP variants exhibits better performance than QP-IP on both metrics. Each of CPLEX QP and BARON hits the time limit on each instance in this set. Note that valid inequalities seem to degrade the performance, possibly due to the large number of such inequalities. 

In an attempt to shed more light on the comparison of the performances of different approaches, we report performance profiles~\cite{DM02}, which are frequently used for benchmarking purposes in optimization. For a given problem instance, the performance ratio of a particular approach is defined as the ratio of the solution time of that approach to the best solution time among all approaches on that particular instance, which is defined to be $+\infty$ if the approach fails to solve the instance. Then, for each approach $a$, a cumulative distribution function $P_a(\tau)$ is defined to be the percentage of the number of instances that can be solved by that approach within a factor of $\tau$ of the solution time of the best approach. Note that only the instances that can be solved by at least one approach within the time limit are included in the comparison. 

\begin{figure}[!h]
\begin{center}
\includegraphics[scale=0.5]{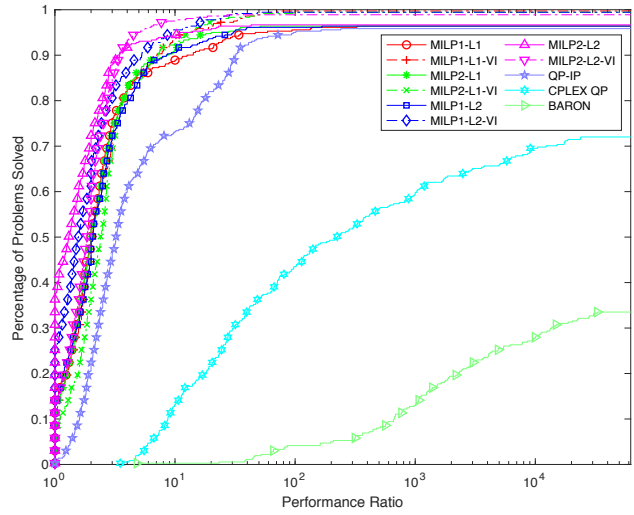}
\end{center}
\caption{Performance Profile on IS1}
\label{fig1}
\end{figure}

\begin{figure}[!b]
\begin{center}
\includegraphics[scale=0.5]{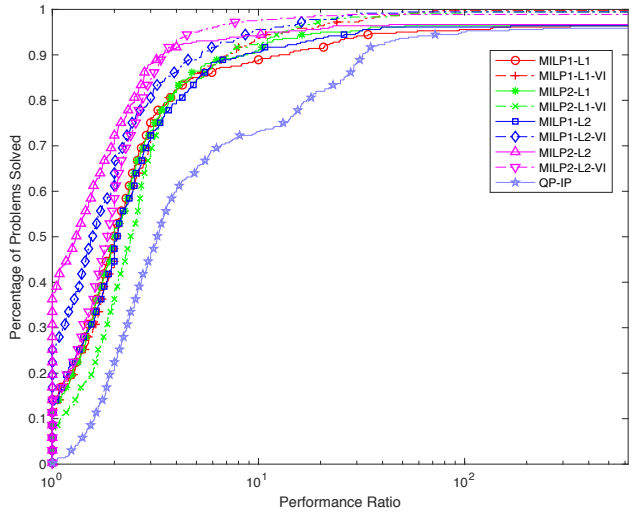}
\end{center}
\caption{Performance Profile on IS1 (excluding CPLEX QP and BARON)}
\label{fig2}
\end{figure}

In Figures~\ref{fig1} and~\ref{fig2}, we report two performance profiles for the instance set IS1. Out of 364 instances in this set, the figures compare the performance ratios on 361 of them, which can be solved by at least one approach. Figure~\ref{fig1} includes all of the approaches whereas CPLEX QP and BARON are excluded in Figure~\ref{fig2} to facilitate an easier comparison among MILP formulations only. Note that performance ratios are reported in logarithmic scale. Furthermore, the markers are included for every 20th performance ratio. 

Both Figures~\ref{fig1} and~\ref{fig2} indicate the superior performance of the proposed MILP formulations over the other global solution approaches on the instance set IS1. In particular, it is worth noticing that each of our 8 MILP variants outperforms QP-IP, which, in turn, exhibits better performance than CPLEX-QP, followed by BARON. In terms of the best solution time, MILP2-L2 exhibits the best performance, followed by MILP1-L2-VI, and MILP2-L2-VI, respectively. Note, however, that each of MILP2-L2-VI and MILP1-L2-VI manages to solve a higher percentage of instances within the time limit than MILP2-L2. 

\begin{figure}[!h]
\begin{center}
\includegraphics[scale=0.5]{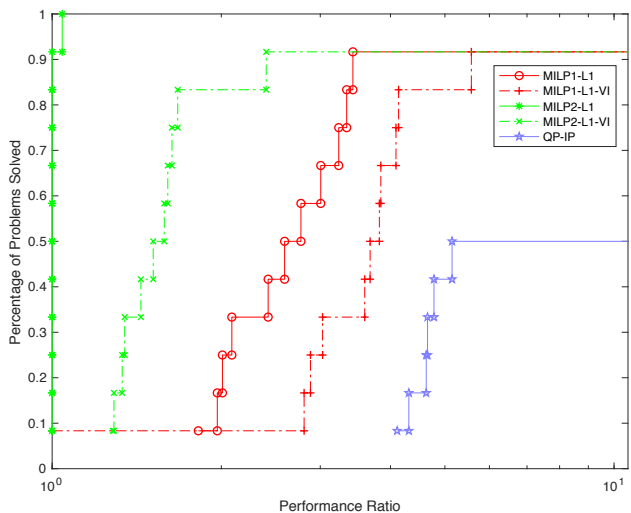}
\end{center}
\caption{Performance Profile on IS2}
\label{fig3}
\end{figure}

We present the performance profile for the larger instance set IS2 in Figure~\ref{fig3}. We do not report CPLEX QP and BARON since each of them hits the time limit on each instance in this set. Note again the logarithmic scale for the performance ratios.

Figure~\ref{fig3} illustrates that MILP2-L1 clearly outperforms each of the other MILP formulations on this set, which is then followed by MILP2-L1-VI, MILP1-L1, MILP1-L1-VI, and QP IP, respectively. 

These results illustrate that our MILP formulations constitute an effective approach for solving standard quadratic programs. The relaxed formulation (MILP2) seems to achieve the best solution times most frequently, especially when used with the improved lower bound $\ell_2(Q)$ whenever this bound can be computed. We remark, however, that the computational effort required for computing this tighter bound can be excessive on larger instances. The addition of valid inequalities may improve or degrade the performance depending on the instance and the number of such inequalities. However, for both (MILP1) and (MILP2), a larger number of instances can be solved to optimality within the time limit by the addition of valid inequalities. In summary, as illustrated by our computational results, both of our formulations are quite competitive, even when used with the simple lower bound $\ell_1(Q)$ and without valid inequalities. 

\section{Concluding Remarks} \label{conc}

In this paper, we propose solving standard quadratic programs by using two alternative MILP reformulations. The first MILP formulation arises from a simple manipulation of the KKT conditions. The second MILP formulation is obtained by exploiting the specific structure of a standard quadratic program and is in fact a relaxation of the first formulation. Both of our MILP formulations involve big-$M$ parameters. We derive bounds on these parameters by taking advantage of the particular structure of (StQP). We show that these bounds are functions of a lower bound on the optimal value. We consider two different lower bounds, which differ in terms of the computational effort and tightness. Our extensive computational experiments illustrate that the proposed MILP formulations outperform another recently proposed MILP approach in~\cite{XVZ15} and are much more effective than general purpose quadratic programming and nonlinear programming solvers. 

Since the tighter bound $\ell_2(Q)$ requires a considerable computational effort for large instances, one can use cheaper methods for computing or approximating this bound. For instance, the dual problem of (DNN) can be solved using a combination of proximal gradient method and binary search (see, e.g.,~\cite{KKT16}), which may be cheaper than using a semidefinite programming solver. Alternatively, based on the encouraging computational results reported in~\cite{LMP17}, a tight linear programming based lower bound can be employed in lieu of $\ell_2(Q)$. We leave these problems for future work.

Another interesting research direction is the investigation of decomposition approaches for the proposed MILP formulations arising from large-scale standard quadratic programs. In addition, encouraged by our computational results, we intend to identify other classes of nonconvex quadratic programs that would be amenable to a similar effective MILP formulation.

\section*{Acknowledgements}

Part of this research was performed while E. Alper Y{\i}ld{\i}r{\i}m was visiting the University of Edinburgh. The author gratefully acknowledges the hospitality of Prof.~Jacek Gondzio and the School of Mathematics. His visit was supported, in part, by T{\"U}B{\.I}TAK-B{\.I}DEB 2219 Turkish Scientific and Technological Research Council International Postdoctoral Research Scholarship Program, which is gratefully acknowledged.

\bibliography{dnn}{}

\begin{thebibliography}{10}

\bibitem{AB05}
K.~M. Anstreicher and S.~Burer.
\newblock {DC} versus copositive bounds for standard {QP}.
\newblock {\em Journal of Global Optimization}, 33(2):299--312, 2005.

\bibitem{BLL09}
P.~Belotti, J.~Lee, L.~Liberti, F.~Margot, and A.~W{\"a}chter.
\newblock Branching and bounds tightening techniques for non-convex {MINLP}.
\newblock {\em Optimization Methods and Software}, 24(4-5):597--634, 2009.

\bibitem{Bom98}
I.~M. Bomze.
\newblock On standard quadratic optimization problems.
\newblock {\em Journal of Global Optimization}, 13(4):369--387, 1998.

\bibitem{Bom02b}
I.~M. Bomze.
\newblock Branch-and-bound approaches to standard quadratic optimization
  problems.
\newblock {\em Journal of Global Optimization}, 22(1-4):17--37, 2002.

\bibitem{Bom02a}
I.~M. Bomze.
\newblock Regularity versus degeneracy in dynamics, games, and optimization: A
  unified approach to different aspects.
\newblock {\em SIAM Review}, 44(3):394--414, 2002.

\bibitem{BD02}
I.~M. Bomze and E.~De~Klerk.
\newblock Solving standard quadratic optimization problems via linear,
  semidefinite and copositive programming.
\newblock {\em Journal of Global Optimization}, 24(2):163--185, 2002.

\bibitem{Bom_etal_00}
I.~M. Bomze, M.~D{\"u}r, E.~De~Klerk, C.~Roos, A.~J. Quist, and T.~Terlaky.
\newblock On copositive programming and standard quadratic optimization
  problems.
\newblock {\em Journal of Global Optimization}, 18(4):301--320, 2000.

\bibitem{BLT08}
I.~M. Bomze, M.~Locatelli, and F.~Tardella.
\newblock New and old bounds for standard quadratic optimization: dominance,
  equivalence and incomparability.
\newblock {\em Mathematical Programming}, 115(1):31, 2008.

\bibitem{BSU18}
I.~M. Bomze, W.~Schachinger, and R.~Ullrich.
\newblock The complexity of simple models - {A} study of worst and typical hard
  cases for the standard quadratic optimization problem.
\newblock {\em Mathematics of Operations Research}, 43(2):651--674, 2018.

\bibitem{BLST16}
P.~Bonami, A.~Lodi, J.~Schweiger, and A.~Tramontani.
\newblock Solving standard quadratic programming by cutting planes.
\newblock Technical report, Polytechnique Montr{\'e}al, Queb{\'e}c, Canada,
  2016.

\bibitem{BD09}
S.~Bundfuss and M.~D{\"u}r.
\newblock An adaptive linear approximation algorithm for copositive programs.
\newblock {\em SIAM Journal on Optimization}, 20(1):30--53, 2009.

\bibitem{BV08}
S.~Burer and D.~Vandenbussche.
\newblock A finite branch-and-bound algorithm for nonconvex quadratic
  programming via semidefinite relaxations.
\newblock {\em Mathematical Programming}, 113(2):259--282, 2008.

\bibitem{CB12}
J.~Chen and S.~Burer.
\newblock Globally solving nonconvex quadratic programming problems via
  completely positive programming.
\newblock {\em Mathematical Programming Computation}, 4(1):33--52, 2012.

\bibitem{DL14}
P.~J.~C Dickinson and L.~Gijben.
\newblock On the computational complexity of membership problems for the
  completely positive cone and its dual.
\newblock {\em Computational Optimization and Applications}, 57(2):403--415,
  2014.

\bibitem{DM02}
E.~D. Dolan and J.~J. Mor{\'{e}}.
\newblock Benchmarking optimization software with performance profiles.
\newblock {\em Mathematical Programming}, 91(2):201--213, 2002.

\bibitem{GT73}
F.~Giannessi and E.~Tomasin.
\newblock Nonconvex quadratic programs, linear complementarity problems, and
  integer linear programs.
\newblock In {\em IFIP Technical Conference on Optimization Techniques}, pages
  437--449. Springer, 1973.

\bibitem{Gib97}
L.~E. Gibbons, D.~W. Hearn, P.~M. Pardalos, and M.~V. Ramana.
\newblock Continuous characterizations of the maximum clique problem.
\newblock {\em Mathematics of Operations Research}, 22(3):754--768, 1997.

\bibitem{gurobi}
{Gurobi Optimization, LLC}.
\newblock Gurobi optimizer reference manual, 2018.

\bibitem{MPBK08}
J.~Hu, J.~Mitchell, J.~Pang, K.~Bennett, and G.~Kunapuli.
\newblock On the global solution of linear programs with linear complementarity
  constraints.
\newblock {\em SIAM Journal on Optimization}, 19(1):445--471, 2008.

\bibitem{Hu2012}
J.~Hu, J.~E. Mitchell, and J.-S. Pang.
\newblock An {LPCC} approach to nonconvex quadratic programs.
\newblock {\em Mathematical Programming}, 133(1):243--277, 2012.

\bibitem{IN88}
T.~Ibaraki and N.~Katoh.
\newblock {\em Resource Allocation Problems: Algorithmic Approaches}.
\newblock MIT Press, Cambridge, MA, USA, 1988.

\bibitem{cplex}
{IBM}.
\newblock {IBM ILOG CPLEX Optimization Studio}, 2018.

\bibitem{KKT16}
S.~Kim, M.~Kojima, and K.-C. Toh.
\newblock A {L}agrangian--{DNN} relaxation: a fast method for computing tight
  lower bounds for a class of quadratic optimization problems.
\newblock {\em Mathematical Programming}, 156(1):161--187, 2016.

\bibitem{K61}
J.~F.~C. Kingman.
\newblock A mathematical problem in population genetics.
\newblock {\em Mathematical Proceedings of the Cambridge Philosophical
  Society}, 57(3):574--582, 1961.

\bibitem{LMP17}
G.~Liuzzi, M.~Locatelli, and V.~Piccialli.
\newblock A new branch-and-bound algorithm for standard quadratic programming
  problems.
\newblock {\em Optimization Methods and Software}, pages 1--19, 2017.

\bibitem{Lof04}
J.~L{\"{o}}fberg.
\newblock {YALMIP} : A toolbox for modeling and optimization in {MATLAB}.
\newblock In {\em Proceedings of the CACSD Conference}, Taipei, Taiwan, 2004.

\bibitem{HM1952}
H.~Markowitz.
\newblock Portfolio selection.
\newblock {\em The Journal of Finance}, 7(1):77--91, 1952.

\bibitem{MS65}
T.~S. Motzkin and E.~G. Straus.
\newblock Maxima for graphs and a new proof of a theorem of {T}ur{\'a}n.
\newblock {\em Canad. J. Math}, 17(4):533--540, 1965.

\bibitem{N99}
I.~Nowak.
\newblock A new semidefinite programming bound for indefinite quadratic forms
  over a simplex.
\newblock {\em Journal of Global Optimization}, 14(4):357--364, 1999.

\bibitem{ST08}
A.~Scozzari and F.~Tardella.
\newblock A clique algorithm for standard quadratic programming.
\newblock {\em Discrete Applied Mathematics}, 156(13):2439--2448, 2008.

\bibitem{TS05}
M.~Tawarmalani and N.~V. Sahinidis.
\newblock {A polyhedral branch-and-cut approach to global optimization}.
\newblock {\em Mathematical Programming}, 103:225--249, 2005.

\bibitem{XVZ15}
W.~Xia, J.~Vera, and L.~F. Zuluaga.
\newblock Globally solving non-convex quadratic programs via linear integer
  programming techniques.
\newblock Technical report, Lehigh University, Bethlehem, PA, USA, 2015.

\bibitem{YMP16}
B.~Yu, J.~E. Mitchell, and J.-S. Pang.
\newblock Solving linear programs with complementarity constraints using
  branch-and-cut.
\newblock Technical report, Rensselaer Polytechnic Institute, Troy, NY, USA,
  2018.

\end{thebibliography}
\bibliographystyle{plain}
\end{document}